\documentclass[table]{amsart} 
\usepackage{amsmath}
\usepackage[margin=1.0in]{geometry}
\usepackage{amscd,amsmath,amsxtra,amsthm,amssymb,stmaryrd,xr,mathrsfs,mathtools,enumerate,commath, comment, enumitem, graphicx}
\usepackage{amsfonts, amssymb, amsthm, amsmath, braket, hyperref, mathtools, comment, mathrsfs, enumitem, cleveref}
\usepackage{stmaryrd}
\usepackage{orcidlink}
\usepackage{multirow}
\usepackage{xcolor}
\usepackage{listings}
\usepackage{commath}
\usepackage{comment}
\usepackage{multicol}
\usepackage{tikz-cd}
\usepackage{tkz-graph}
\usepackage{colonequals}
\usepackage{hyperref}
\usepackage{graphicx}
\usepackage{bigstrut}
\usepackage{longtable, multirow, array}

\newcommand{\Mod}[1]{\ (\mathrm{mod}\ #1)} 

\newcommand{\calf}{\mathcal{F}}

\usepackage{longtable} 
\usepackage{pdflscape} 
\usepackage{booktabs}
\usepackage{hyperref}
\definecolor{vegasgold}{rgb}{0.77, 0.7, 0.35}
\definecolor{darkgoldenrod}{rgb}{0.72, 0.53, 0.04}
\definecolor{gold(metallic)}{rgb}{0.83, 0.69, 0.22}
\hypersetup{
 colorlinks=true,
 linkcolor=darkgoldenrod,
 filecolor=brown,      
 urlcolor=gold(metallic),
 citecolor=darkgoldenrod,
 }

\usepackage[all,cmtip]{xy}
\DeclareFontFamily{U}{wncy}{}
\DeclareFontShape{U}{wncy}{m}{n}{<->wncyr10}{}
\DeclareSymbolFont{mcy}{U}{wncy}{m}{n}
\DeclareMathSymbol{\Sh}{\mathord}{mcy}{"58}
\usepackage[T2A,T1]{fontenc}
\usepackage[OT2,T1]{fontenc}

\usepackage{tikz}
\usetikzlibrary{shapes.geometric}
\usetikzlibrary{decorations.markings}
\tikzset{every loop/.style={min distance=10mm,looseness=10}}
\tikzstyle{vertex}=[auto=left,circle,minimum size=1pt,inner sep=0pt]

\newtheorem{theorem}{Theorem}[section]
\newtheorem{lemma}[theorem]{Lemma}

\newtheorem*{theorem*}{Theorem}
\newtheorem*{ass*}{Assumption}

\newtheorem{corollary}[theorem]{Corollary}
\newtheorem{remark}[theorem]{Remark}
\newtheorem{example}[theorem]{Example}

\newtheorem{conjecture}[theorem]{Conjecture}
\newtheorem{proposition}[theorem]{Proposition}

\renewcommand{\a}{\alpha}
\renewcommand{\b}{\beta}
\newcommand{\rad}{\operatorname{rad}}

\newcommand{\Z}{\mathbb{Z}}

\newcommand{\Q}{\mathbb{Q}}
\newcommand{\F}{\mathbb{F}}
\newcommand{\D}{\mathfrak{D}}

\DeclareMathOperator{\Res}{Res}

\numberwithin{equation}{section}

\definecolor{codegreen}{rgb}{0,0.6,0}
\definecolor{codegray}{rgb}{0.5,0.5,0.5}
\definecolor{codepurple}{rgb}{0.58,0,0.82}
\definecolor{backcolour}{rgb}{0.95,0.95,0.92}

\lstdefinestyle{sageStyle}{
    backgroundcolor=\color{backcolour},   
    commentstyle=\color{codegreen},
    keywordstyle=\color{magenta},
    numberstyle=\tiny\color{codegray},
    stringstyle=\color{codepurple},
    basicstyle=\ttfamily\footnotesize,
    breakatwhitespace=false,         
    breaklines=true,                 
    captionpos=b,                    
    keepspaces=true,                 
    numbers=left,                    
    numbersep=5pt,                  
    showspaces=false,                
    showstringspaces=false,
    showtabs=false,                  
    tabsize=4,
    frame=single, 
    language=Python 
}

\begin{document}

\title[Monogenic Fields from Polynomial Compositions] {Monogenic Fields from Polynomial Compositions with Applications }
\author[A. Jakhar]{Anuj Jakhar\, \orcidlink{0009-0007-5951-2261}}
\address[Jakhar]{Department of Mathematics, Indian Institute of Technology Madras, Chennai, Tamil Nadu, India-600036}
\email{Anujjakhar@iitm.ac.in}
\address[Kalwaniya]{Department of Mathematics, Indian Institute of Technology Madras, Chennai-600036, Tamil Nadu, India}
\email{ravikalwaniya3@gmail.com}

\author[R.~Kalwaniya]{Ravi kalwaniya\, \orcidlink{0009-0008-6964-5276}}
\author[P.~Yadav]{Prabhakar Yadav\, \orcidlink{0009-0000-9622-3775}}
\address[Yadav]{Department of Computer Science, Ashoka University, Sonipat-131021, Haryana, India}
\email{pkyadav914@gmail.com; yadavprabhakar096@gmail.com}

\keywords{Norm, trace, discriminant, monogeneity, differential equation}
\subjclass[2020]{11R04; 11R29, 11Y40.}

\begin{abstract}
A number field $K$ is called \emph{monogenic} if its ring of integers $\mathbb{Z}_K$ can be expressed as a simple ring extension $\mathbb{Z}[\alpha]$ for some $\alpha \in \mathbb{Z}_K$. A monic irreducible polynomial $f(x)\in\mathbb{Z}[x]$ is said to be monogenic if one of its roots generates both the number field and its ring of integers. In this article, we establish the necessary and sufficient conditions for $[\mathbb{Z}_{K_i}:\mathbb{Z}[\alpha_i]]=1$, where $K_i=\mathbb{Q}(\alpha_i)$ and $\alpha_i$ is a root of the composed polynomial $f_i(x^k+b)$ for $i=1,2$. Here, $f_1(x)=x^n+c\sum_{j=1}^{n}(ax)^{n-j}\in\mathbb{Z}[x]$ and $f_2(x)=x^n+c\sum_{j=1}^{n}a^{j-1}x^{n-j}\in\mathbb{Z}[x]$ are irreducible polynomials of degree $n\ge 3$.
In addition, we derive asymptotic estimates for the number of monogenic polynomials in these families under natural assumptions. As an application of our main results, we construct a class of polynomials with non-square-free discriminants. We also analyze the behavior of solutions to certain related differential equations.
\end{abstract}
\maketitle

\section{Introduction}
 A number field $K$ is said to be \emph{monogenic} if its ring of integers $\Z_K$ admits a power integral basis; that is, if there exists $\theta \in \Z_K$ such that $\Z_K = \mathbb{Z}[\theta]$. The study of monogenic number fields is a classical topic in algebraic number theory, originating in the work of Dedekind. It continues to attract considerable attention due to its deep connections with discriminants, index forms, and Diophantine equations. Let $f(x)\in\mathbb{Z}[x]$ be an irreducible monic polynomial and let $\alpha$ be a root of $f(x)$. The polynomial $f(x)$ is called \emph{monogenic} if $\Z_{\mathbb{Q}(\alpha)}=\mathbb{Z}[\alpha]$. In this case, the number field $\mathbb{Q}(\alpha)$ is necessarily monogenic. However, in general, the converse does not hold. This distinction highlights the subtle nature of monogenity at the level of polynomials.\\[1mm]
In recent decades, explicit criteria for the monogenity of polynomials
have played a central role in the construction of monogenic fields.
Notable examples include pure fields, trinomials, generalized
trinomials, and families related to special recursive polynomials. A
particularly subtle situation arises when considering 
\emph{compositions of polynomials}. Even if a polynomial $f(x)$ is
monogenic, the composed polynomial $f(g(x))$ does not need to be monogenic and
the arithmetic of the index
$[\Z_K:\mathbb{Z}[\alpha]]$ becomes significantly more involved.

\par A fundamental tool in this context is the relation between the
discriminant of a number field and that of a defining polynomial.
Let $K=\mathbb{Q}(\theta)$ and $f(x)$ be the minimal
polynomial of $\theta$, then
\begin{equation}\label{formula}
	\D_f = [\mathbb{Z}_K : \mathbb{Z}[\theta]]^2 d_K,
\end{equation}
where $d_K$ denotes the discriminant of $K$ and $\D_f$ is the discriminant of $f(x)$\footnote{If $f(x)$ is a monic polynomial of degree $n$ with roots $\theta_1,\dots,\theta_n$, then its discriminant is
$\prod_{1 \le i < j \le n}(\theta_i-\theta_j)^2$.}.
Formula~\eqref{formula} shows that knowledge of $\D_f$ and the index $[\mathbb{Z}_K:\mathbb{Z}[\theta]]$ allows one to determine the field discriminant $d_K$.\\[1mm]
The study of the irreducibility and arithmetic properties of polynomial compositions has a rich history. The work of Jones has provided deep insights
into the irreducibility and Galois-theoretic behavior of composite polynomials. From an arithmetic perspective, Evertse and Gy\H{o}ry \cite{EK}, and further Ga\'al~\cite{Gal} developed general techniques for controlling indices and discriminants, which are particularly effective when combined with congruence conditions in coefficients. These methods have been successfully applied to several families of composed polynomials, although a general theory remains far from complete. Significant progress in the monogenity of polynomial compositions has been achieved in recent years. In particular, Jones~\cite{Jones2022}
constructed infinite families of monic Eisenstein polynomials $f(x)\in\mathbb{Z}[x]$ such that the power composition polynomials $f(x^{d^{n}})$ are monogenic for all integers $n\ge 0$ and any integer $d>1$, provided that $f(x)$ is Eisenstein with respect to every prime
divisor of $d$. Subsequently, Jones~\cite{Jones2023} investigated the monogenity of Shanks polynomials of power composition and later established in~\cite{Jones2024} a necessary and sufficient criterion for the monogenity of $f(x^{p})$ in terms of the periodicity of the associated linear recurrence sequence modulo $p$ and $p^{2}$. Recently, Barman, Narode and Wagh~\cite{BarmanNarodeWagh2026} investigated the monogenity of reciprocal polynomials. They obtained sufficient conditions for even degree reciprocal polynomials to be monogenic, partially proved a conjecture of Jones (2021), and established a lower bound for certain sextic monogenic reciprocal polynomials. In 2026, Barman, Narode and Wagh \cite{BNW026} studied the monogenity of a special type family of polynomials. In a joint work with Barman \cite{BJKY26} the authors further generalized this family and characterized its monogenity. These works also produced several infinite families that illustrate the theory.
There has been recent interest in investigating the monogenity of compositional and iterated polynomials (see \cite{Castillo2022, Gaal2, Jakhar2024Binomial,  J-K-Y, Jones2021, KoenigSmithWolske2025, Li2022, SharmaSarmaLaishram2024, Smith23,   hanson}). For a comprehensive overview of problems related to discriminants and monogenity, see the survey by Ga\'al~\cite{Gaal2024}.\\[1mm]
More recently, Kaur, Kumar, and Remete~\cite{KKR} obtained the  necessary and sufficient conditions for a monic irreducible polynomial $f(x)\in\mathbb{Z}[x]$ to have the property that the power-compositional
polynomial $f(x^{k})$, with $k\in\mathbb{Z}_{>0}$, is monogenic. In another direction, Sharma and Sarma~\cite{SharmaSarmaMonogenity} studied
compositions $f\circ g$, where $f(x)=x^{n}+ax+b$ is an irreducible trinomial and $g(x)\in\mathbb{Z}[x]$ is such that $f(g(x))$ remains irreducible. They provided necessary and sufficient conditions for the monogenity of $f\circ g$, based on the arithmetic of primes that divide the discriminant of $f(x)$ and their contribution to the index $[\mathbb{Z}_K:\mathbb{Z}[\alpha]]$.
In a related direction, Harrington and Jones~\cite{HarringtonJones2021} introduced two infinite families of polynomials namely,
\begin{equation}\label{eqn:1.2}
    f_1(x)=x^{n}+c\sum_{i=1}^{n}(ax)^{\,n-i}
\quad \text{and} \quad
f_2(x)=x^{n}+c\sum_{i=1}^{n}a^{\,i-1}x^{\,n-i},
\end{equation}
and established explicit formulae for their discriminants together with criteria for their irreducibility for some particular choices of $a$ and $c$. Subsequently, we ~\cite{JakharKalwaniyaYadav+2024+1147+1154} investigated the  monogenity of these two families under certain conditions.\\[1mm]
Motivated by recent interest in developing a criterion for the monogenity of compositions, the present article studies the monogenity of polynomial compositions of the form $f_i(g(x))$, where $f_i(x)$ are defined in \eqref{eqn:1.2} and $g(x)=x^k+b\in\mathbb{Z}[x]$ is an arbitrary binomial. In this direction, we provide the following results.
\begin{theorem}\label{main result1}
Let $F(x)=(x^k+b)^n + c\sum_{i=1}^{n}(a(x^k+b))^{n-i}\in\Z[x]$ be an irreducible polynomial of degree $nk$ over $\mathbb{Q}$. Let $K=\mathbb{Q}(\theta)$, where $\theta$ is a root of $F(x)$. Then a prime divisor $p$ of $\D_F$ does not divide the index $[\mathbb{Z}_K : \mathbb{Z}[\theta]]$ if and only if one of the following statements holds:
\begin{itemize}
    \item[\textup{(i)}] If $p\mid c$, then $p^2\nmid c$.
    \item[\textup{(ii)}]  If $p\nmid c$ and $p\mid k$ with $k=p^js'$ such that $p\nmid {s'}$. Then $\frac{1}{p}[F(x)-h(x)^{p^j}]$ and $h(x)$ are coprime modulo $p$, where $h(x)=(x^{s}+b)^n+c\sum_{i=1}^n(a(x^s+b)^{n-i}$. If, in addition, $p\mid a$ and $p\mid n$ with $n=p^is$ such that $p\nmid s$, then $p\nmid [c^{n+1}\,a_1^n+(-c_2)^n],$ where $a_1=\frac{a}{p}$ and $c_2=\frac{c+(-c)^{p^{i+j}}}{p}$.
    
   \item [\textup{(iii)}] If $p\nmid ck,\,p\mid a$ and $p\mid n$, then and $p\nmid [(-c_2)^s+c(ca_1)^s]$, where $n=p^js,\,a_1=\frac{a}{p}$ and $c_2=\frac{c+(-c)^{p^j}}{p}$.
   \item [\textup{(iv)}] If $p\nmid ck$ and $p \mid an$ but $p \nmid \gcd(a,n)$, then $p^2 \nmid f_1(b)$ whenever $k \geq 2$.
    \item [\textup{(v)}]If $p\nmid ckan$ and $p\mid (n+1)$, then $p^2\nmid f_1(b),\,p^2\nmid (1-a^nc)$ and $p^2\nmid (1+a^ncn)$. 
    \item[\textup{(vi)}] If $p\nmid ckan(n+1)$, then $p^2\nmid f_1(b),\,p\nmid (1+a^ncn)$ and  $p^2\nmid [ n^n(1-a^nc)^{n+1}+a^nc(n+1)^{n+1}].$
\end{itemize}
\end{theorem}

The following corollary follows as a consequence of the above theorem. It will be used in Section \ref{ana} and the proof will be given in the same section.
\begin{corollary} \label{cor:2.2}
    Let $n$ and $k$ be positive integers. Let $a,b$ and $c$ be integers such that $\rad(k) \mid c$, $\rad(n+1) \mid ac$ and $\gcd(b,c) > 1$. Let $t=\gcd(a,n)$ and denote $n_1 = \frac{n}{t}$, $a_1 = \frac{a}{t}$. Define the polynomial $f_1(x) = x^n + c\sum_{i=1}^{n}(ax)^{n-i} \in \Z[x]$. Further, assume that the following conditions hold true:
    \begin{itemize}
        \item[\textup{(i)}] For prime $p \mid t$, if $p^2 \mid a$, then $(-c)^{p} \not\equiv -c \Mod{p^2} $. If $p \parallel a$, then $(-c)^{p} \equiv -c \Mod{p^2}$.
        \item[\textup{(ii)}] $c$, $f_1(b)$ and $n_1^n (1-a^nc)^{n+1} + a_1^n c (n+1)^{n+1}$ are squarefree.
    \end{itemize}
    Then the polynomials $f_1(x)$ and $F(x) = f_1(x^k+b) \in \Z[x]$ are both irreducible and monogenic.
\end{corollary}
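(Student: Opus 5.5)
Since $\gcd(b,c)>1$, we may fix a prime $q$ with $q\mid b$ and $q\mid c$. I will use $q$ to get irreducibility and then check monogenity prime by prime with Theorem~\ref{main result1}.

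\emph{Irreducibility.} Because $c$ is squarefree by (ii), we have $q\parallel c$. All non-leading coefficients of $f_1(x)=x^n+c\sum_{i=1}^n (ax)^{n-i}$ are multiples of $c$, so they are divisible by $q$. The constant term equals $c$, so it is not divisible by $q^2$. Hence $f_1$ is $q$-Eisenstein, and therefore irreducible.

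For $F(x)=f_1(x^k+b)$, reduce modulo $q$. Since $q\mid b$ and $q\mid c$, we get $F(x)\equiv (x^k+b)^n\equiv x^{nk}\pmod q$, so every non-leading coefficient of $F$ is divisible by $q$. The constant term is $F(0)=f_1(b)$. Every term of $f_1(b)$ is divisible by $q^2$ except $c\cdot a^{n-1}b^{n-1}$ (when $n\ge 3$, so that $b^n$ contributes $q^2$) and the constant term $c$. Thus $f_1(b)\equiv c\pmod{q^2}$, and so $q\parallel f_1(b)$; alternatively, this also follows directly from $f_1(b)$ being squarefree. Hence $F$ is $q$-Eisenstein of degree $nk$, and so it is irreducible. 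Monogenity of $f_1$ itself is then the case $k=1$, $b=0$ of the same argument (or it follows from the earlier paper). Note that $f_1(0)=c$, which is squarefree.

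\emph{Monogenity of $F$.} It suffices to check that no prime $p\mid \D_F$ divides the index, and for this I will go through the cases of Theorem~\ref{main result1}.

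\begin{itemize}
\item If $p\mid c$, condition (i) holds because $c$ is squarefree.
\item If $p\nmid c$, then $p\nmid k$, since $\rad(k)\mid c$. This excludes case (ii).
\item Also, $p\nmid n+1$ unless $p\mid a$, since $\rad(n+1)\mid ac$. Hence case (v) never occurs: it needs $p\nmid a$ together with $p\mid n+1$.
\item In case (iv), we need $p^2\nmid f_1(b)$, which is given.
\item In case (vi), we need $p^2\nmid f_1(b)$, which is given. We also need $p^2\nmid n^n(1-a^nc)^{n+1}+a^nc(n+1)^{n+1}$. Here $p\nmid t$, so this expression is $t^n$ times the squarefree expression in (ii), and the condition follows.
\item Still in case (vi), we need $p\nmid 1+a^ncn$. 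I expect to derive this from the discriminant formula for $f_1$ of Harrington--Jones. If $p$ divides both $1+a^ncn$ and the displayed expression, then a computation gives a contradiction with $p\nmid n(n+1)ac$. I will mod out, substituting $a^ncn\equiv -1$, and check that the expression becomes a unit multiple of $(n+1)^{n+1}$.
\end{itemize}

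\emph{Case (iii), the main obstacle.} Here $p\nmid ck$ and $p\mid\gcd(a,n)=t$. Write $n=p^js$ with $p\nmid s$, and write $c_2=(c+(-c)^{p^j})/p$. I need to show $p\nmid (-c_2)^s+c(ca_1)^s$, where $a_1=a/p$.

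If $p^2\mid a$, then $p\mid a_1$, so the condition reduces to $p\nmid c_2$. This means $(-c)^{p^j}\not\equiv -c\pmod{p^2}$. I will reduce this to the $p$-th power condition in hypothesis (i) using the standard lifting fact: $x^{p^j}\equiv x\pmod{p^2}$ iff $x^{p}\equiv x\pmod{p^2}$ for $p\nmid x$, which holds because $x^{p}\equiv x \pmod{p}$ implies the Teichmüller-type iteration is stable modulo $p^2$.

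If $p\parallel a$, hypothesis (i) gives $p\mid c_2$. The condition then becomes $p\nmid c^{s+1}a_1^s$, which is true.

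The delicate points are the exponent bookkeeping, matching $p^j$ against $p$, and the case-(vi) verification of $p\nmid 1+a^ncn$.
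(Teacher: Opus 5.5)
Your overall route matches the paper's. You use Eisenstein at a prime $q\mid\gcd(b,c)$, then check Theorem~\ref{main result1} case by case, and you handle cases (i), (ii), (iv), (v) and the two subcases of (iii) the same way. Your explicit passage from $(-c)^p$ to $(-c)^{p^j}$ modulo $p^2$ is correct: $x^p\equiv x\pmod p$ gives $x^{p^2}\equiv x^p\pmod{p^2}$, hence $x^{p^j}\equiv x^p\pmod{p^2}$ for all $j\ge 1$. It fills a step the paper leaves implicit.

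The genuine problem is your plan for the requirement $p\nmid 1+a^ncn$ in case (vi). Write $u=a^nc$ and $E(u)=n^n(1-u)^{n+1}+(n+1)^{n+1}u$. If $nu\equiv -1\pmod p$, then $n^n(1-u)^{n+1}\equiv (n+1)^{n+1}/n$ and $(n+1)^{n+1}u\equiv -(n+1)^{n+1}/n$, so $E\equiv 0\pmod p$. The substitution produces zero, not a unit multiple of $(n+1)^{n+1}$, so no contradiction with $p\nmid n(n+1)ac$ can come out of it. In fact $u=-1/n$ is a double root of $E$ (one checks $E'(-1/n)=0$), so by Gauss's lemma $(1+nu)^2$ divides $E(u)$ in $\Z[u]$. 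This is exactly why $(1+a^ncn)^2$ appears in the denominator of \eqref{discriminant_of_F}. Passing to the quotient $E/(1+a^ncn)^2$ does not rescue the plan either. That quotient is $\equiv (n+1)^n/2\not\equiv 0$ modulo such a $p$ (which is odd, as $p\nmid n(n+1)$), but nothing forces $p$ to divide it, since $p\mid \D_F$ may come only from the factor $f_1(b)^{k-1}$.

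The repair is short and uses what you have already verified. We have $(1+a^ncn)^2\mid E=t^n\bigl[n_1^n(1-a^nc)^{n+1}+a_1^nc(n+1)^{n+1}\bigr]$, and $p\nmid t$ in case (vi). So a prime $p\mid 1+a^ncn$ would force $p^2\mid n_1^n(1-a^nc)^{n+1}+a_1^nc(n+1)^{n+1}$, contradicting hypothesis (ii). Thus $p\nmid 1+a^ncn$ is a consequence of the condition $p^2\nmid E$ you already checked, not an independent verification. The paper relies on the same divisibility when it asserts that (vi) follows from the squarefreeness assumptions; in the proof of Theorem~\ref{main result1}(vi) it notes that $p^t\mid 1+a^ncn$ forces $p^{2t}\mid E$.

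A minor slip elsewhere: in the irreducibility of $F$, the term $c(ab)^{n-1}$ is not an exception. For $n\ge 2$, every term of $f_1(b)$ other than $c$ is divisible by $q^2$. Your conclusion $q\parallel f_1(b)$ and your alternative argument via squarefreeness of $f_1(b)$ are still fine.
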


\begin{theorem}\label{main result2}
Let $\calf(x)=(x^k+b)^n + c\sum_{i=1}^{n}a^{i-1}\,(x^k+b)^{n-i}\in\Z[x]$ be an irreducible polynomial of degree $nk$ over $\mathbb{Q}$. Let $K=\mathbb{Q}(\theta)$, where $\theta$ is a root of $\calf(x)$. Then a prime divisor $p$ of $\D_{\calf}$ does not divide the index $[\mathbb{Z}_K : \mathbb{Z}[\theta]]$ if and only if one of the following statements holds:
\begin{enumerate}
    \item [\textup{(i)}] If $p\mid c$, then $p^2\nmid a^{n-1}c$.
    \item [\textup{(ii)}] If $p\nmid ca$ and $p\mid k$ with $k=p^js'$ such that $p\nmid {s'}$, then $\frac{1}{p}[\calf (x)-h(x)^{p^j}]$ and $h(x)$ coprime modulo $p$, where $h(x)=(x^{s'}+b)^n+c\sum a^{i-1}(x^{s'}+b)^{n-i}$.
    \item [\textup{(iii)}] If $p\nmid cka$ and $p\mid n$, then $p^2\nmid f_2(b)$, whenever $k \geq 2$.
     \item [\textup{(iv)}] If $p\nmid cak$ and $p\mid(n+1)$, then $p^2\nmid f_2(b),\,p^2\nmid (a+nc)$ and $p^2\nmid (c-a)$.
     \item [\textup{(v)}]If $p\nmid cak(n+1)$, then $p^2\nmid f_2(b),\,p\nmid (nc+a)$ and $p^2\nmid [(-n)^n(c-a)^{n+1} - a^nc~(n+1)^{n+1}]$.
\end{enumerate}
\end{theorem}
\begin{remark}
We point out that, by taking $b=0$ and $k=1$ in the above theorems, we recover the main results of \cite{JakharKalwaniyaYadav+2024+1147+1154}. In addition, no additional conditions are imposed on $a,\,c$ and $n$.
\end{remark}
In 2012, Kedlaya~\cite{Kedlaya2012} provided a method to construct, for each integer $n \geq 2$, infinitely many degree-$n$ monic irreducible polynomials $f(x) \in \mathbb{Z}[x]$ such that $\D_f$ is squarefree. Then these polynomials are necessarily monogenic by~\eqref{formula}. However, when $\D_f$ is not squarefree, this no longer guaranties that $f(x)$ is monogenic, and determining monogenity in such cases can be difficult. 

In 2020, Jones~\cite{Jones2020}, using a result of Pasten~\cite[Theorem~1.1]{Pasten2015} along with a modification of Kedlaya's techniques, constructed explicit infinite families of monic irreducible polynomials $f(x) \in \mathbb{Z}[x]$ of degree $p$, for every prime $p \geq 3$, such that $\D_f$ is not squarefree but $f(x)$ is monogenic. Furthermore, Jones and White~\cite{L3} gave an asymptotic formula for certain special types of monogenic trinomials. In 2022, Bhargava, Shankar and  Wang~\cite{bhargava} proved that at least $30\%$ of polynomials have squarefree discriminant and are therefore monogenic.
In 2025, Barman, Narode and Wagh~\cite{RAV} constructed another infinite family of monogenic polynomials of degree~$q$ with non-squarefree discriminants, where $q$ is a prime of the form $q = q_0 + q_1 - 1$, with $q_0$ and $q_1$ both prime. In this direction, we provide the following result.

\begin{theorem}\label{analytic main result}
    Let $n$ and $k$ be positive integers. Let $\ell$ be a prime and $\kappa$ be the integer $\rad(\ell k)$. If the $abc$-conjecture is true, then for a fixed integer $a$ divisible by $\rad(n+1)$, there are at least
    \[
    \prod_{p \leq \sqrt{n}}\frac{1}{p^2} \left( 1 - \frac{\omega_G(p)}{p^2} \right) \prod_{p > \sqrt{n}} \left( 1 - \frac{n}{p^2} \right)^2 \prod_{p \mid \ell  k} \left( 1-\frac{1}{p} \right) \frac{BC}{\zeta(2) \kappa \varrho^2}
    \] 
    pairs of monogenic polynomials $f_1(x) = x^n + c \sum_{i=1}^{n} (ax)^{n-i} \in \mathbb{Z}[x]$ and $F(x) = (x^k+b)^n + c \sum_{i=1}^{n} (a(x^k+b))^{n-i} \in \mathbb{Z}[x]$ satisfying $b \leq B$ and $c \leq C$.
\end{theorem}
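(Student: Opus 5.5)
Our plan is to reduce Theorem \ref{analytic main result} to Corollary \ref{cor:2.2} and count the pairs $(b,c)$ satisfying its hypotheses. Fix $a$ with $\rad(n+1)\mid a$, so the condition $\rad(n+1)\mid ac$ holds automatically. We restrict $c$ to multiples of $\kappa=\rad(\ell k)$, which gives $\rad(k)\mid c$ and $\ell\mid c$, and $b$ to multiples of $\ell$, which gives $\gcd(b,c)\ge \ell>1$. Condition (i) of Corollary \ref{cor:2.2} involves only the finitely many primes $p\mid t=\gcd(a,n)$ and congruences of $c$ modulo $p^2$. We will absorb it, together with the local conditions at small primes $p\le\sqrt n$, into a fixed set of admissible residue classes of $(b,c)$ modulo $\prod_{p\le \sqrt n}p^2$ (and $\varrho$). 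Counting these classes produces the factors $\frac{1}{p^2}\bigl(1-\frac{\omega_G(p)}{p^2}\bigr)$ and the normalisation $\varrho^2$. For every $(b,c)$ in this set, Corollary \ref{cor:2.2} then gives monogenity of both $f_1$ and $F$ once we know that $c$, $f_1(b)$ and
\[
G(c):=n_1^n(1-a^nc)^{n+1}+a_1^n c(n+1)^{n+1}
\]
are squarefree.

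Next comes the squarefree sieve. The quantity $c$ runs over $c=\kappa c'$ with $c\le C$. The density of squarefree $c$ in this progression, with the extra coprimality coming from $\kappa$ squarefree, is $\frac{1}{\zeta(2)}\prod_{p\mid \ell k}(1-\frac1p)^{-1}\cdots$. After the restriction to multiples of $\kappa$ this yields the factor $\frac{C}{\kappa\zeta(2)}\prod_{p\mid\ell k}(1-\frac1p)$. We will check this bookkeeping carefully against the stated constant. For $f_1(b)$, viewed as a polynomial in $b$ of degree $n$ for each fixed $c$, and for $G(c)$, of degree $n+1$ in $c$, we use the standard local densities: for $p>\sqrt n$ the number of roots modulo $p^2$ is at most $n$, giving the factors $(1-\frac{n}{p^2})$, one for $f_1(b)$ and one for $G(c)$. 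For $p\le\sqrt n$ we use $\omega_G(p)$, the number of bad residues modulo $p^2$. The two-variable count is a product because the conditions on $b$ (via $f_1(b)$, for fixed $c$) and on $c$ (via $c$ and $G(c)$) can be handled fibrewise. We sum over $c$ the count of admissible $b\le B$, which is uniform in $c$ modulo the small-prime classes.

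The main obstacle is the tail of the sieve: primes $p$ with $p^2\mid f_1(b)$ or $p^2\mid G(c)$ for large $p$ (say $p>\log B$ or $p>\log C$). There is no unconditional control of these for polynomials of degree $\ge4$, so this is where the $abc$-conjecture enters. We will invoke Granville's theorem (or Pasten's version, \cite{Pasten2015}, as in Jones \cite{Jones2020}): under $abc$, a polynomial without repeated factors takes squarefree values with the expected product density $\prod_p(1-\omega(p)/p^2)$. To apply it we must verify that $G(x)$ and $f_1(x)$ (for admissible $c$) are separable. We will do this via their discriminants, which are nonzero by the Harrington–Jones discriminant formula \cite{HarringtonJones2021}. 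Finally we need irreducibility of $F$; this is supplied by Corollary \ref{cor:2.2}, since its conclusion includes irreducibility. With the main term established, the lower bound follows by bounding each local factor below by the displayed expressions, discarding the $o(BC)$ error terms, and absorbing the constraint-class counting into $\varrho^2$.
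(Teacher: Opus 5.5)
Your architecture is the paper's: you restrict $c$ to multiples of $\kappa$ and $b$ to multiples of $\ell$, impose Corollary~\ref{cor:2.2}(i) by a congruence modulo $\varrho^2$, and use the $abc$-conditional Theorem~\ref{thm:5.1} for squarefree values. However, the sieve in $c$ is run on the wrong polynomial, and as set up the set you count is empty.

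Put $u=a^nc$ and $N(u)=n^n(1-u)^{n+1}+(n+1)^{n+1}u$. Then your $G(c)=n_1^n(1-a^nc)^{n+1}+a_1^nc(n+1)^{n+1}$ equals $t^{-n}N(a^nc)$. One checks that $N(-1/n)=0$ and $N'(-1/n)=-(n+1)n^n\bigl(\tfrac{n+1}{n}\bigr)^{n}+(n+1)^{n+1}=0$. So $(1+na^nc)^2$ divides $G(c)$ identically in $\mathbb{Z}[c]$. For instance, for $n=2$ and $a=1$ one has $4(1-c)^3+27c=(1+2c)^2(4-c)$.

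Since $|1+na^nc|\ge 2$ whenever $a\neq 0$, $n\ge 2$, $c\ge 1$, the integer $G(c)$ is never squarefree. Moreover, Theorem~\ref{thm:5.1} does not apply to $G$ at all, because $G$ has a repeated factor.

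Your proposed separability check via the Harrington--Jones formula cannot catch this. That formula concerns the discriminant of $f_1$ in $x$, not the discriminant of $G$ as a polynomial in $c$. In fact, the $(1+a^ncn)^2$ in the denominator of \eqref{discriminant_of_F} is precisely this double factor.

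The paper's proof avoids the problem by defining $G$ through \eqref{eqn:5.1}. It divides the bracket (with $c=\kappa x$) by $(1+na^n\kappa x)^2$, and observes that this is the only repeated factor and occurs to order exactly two. It then computes the fixed divisor of the quotient to get $D'=1$, and applies Theorem~\ref{thm:5.1} to the quotient in the variable $c_1$. You need the same change, with hypothesis (ii) of Corollary~\ref{cor:2.2} read for the quotient rather than the bracket, as the paper's proof implicitly does. At primes $p\nmid 1+a^ncn$ the bracket and the quotient have the same $p$-adic valuation, so there the two conditions coincide.

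A lesser point, which the paper shares: Theorem~\ref{thm:5.1} is an asymptotic for one fixed polynomial, with no uniformity in its coefficients. So it does not supply your claimed uniformity in $c$ of the $b$-count. As in the paper, the product bound is only justified with $B\to\infty$ for each fixed admissible $c$, using a lower bound for $\prod_p\bigl(1-\omega_c(p)/p^2\bigr)$ that does not depend on $c$.
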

As an application of our main results, we examine a class of linear differential equations naturally associated with polynomials of the form $F(x)$ and $\calf (x)$. The main idea is to connect the analytic behavior of these equations with the arithmetic properties of the corresponding auxiliary polynomial. In particular, when the associated polynomial is monogenic, its roots admit an explicit description in terms of an integral basis. This leads to a unified characterization of exponential solutions of the differential equation. In this direction, we provide the following results.
\begin{theorem}{\label{t1}}
    Let \begin{align}{\label{difft1}}
     \left(\frac{d^k }{dx^k}+b\right)^ny+ca^{n-1} \left(\frac{d^k }{dx^k}+b\right)^{n-1}y+\cdots+ca \left(\frac{d^k }{dx^k}+b\right)+cy=0
    \end{align}
    be a differential equation where $n\geq 1$. Let $F  =(z^k+b)^n + c\sum_{i=1}^{n}(a(z^k+b))^{n-i}$ be the irreducible auxiliary equation of \eqref{difft1}
  with a root $\theta$. If for each prime $p$ that divides the discriminant $D_{\mathcal{F}}$
 of $\mathcal{F}(z)$ that satisfies any of the conditions \textup{(i)} to \textup{(vi)} of Theorem \ref{main result1}, then the general
 solution of the given differential equation \eqref{difft1} is of the form
\begin{align*}
    y(x) = \sum_{i=1}^{nk} \alpha_i\prod_{j=1}^{nk}e^{c_{j-1}^{(i)} \theta^{j-1}x},
\end{align*}
where $c_{j-1}^{(i)}$ are integers and $\alpha_i$ are arbitrary real constants for all $1\leq i,j \leq nk$.
\end{theorem}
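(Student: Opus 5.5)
The plan is to combine Theorem \ref{main result1} with the standard theory of constant-coefficient linear differential equations. First I make the operator identification explicit. Set $D=\frac{d}{dx}$. Equation \eqref{difft1} reads $F(D)y=0$, where $F(z)=(z^k+b)^n+c\sum_{i=1}^n (a(z^k+b))^{n-i}$ is exactly the polynomial of Theorem \ref{main result1}. I then check that the coefficients attached to each power of $(D^k+b)$ in \eqref{difft1} agree with the coefficients $ca^{n-i}$. Since $F$ is irreducible over $\Q$ and $\Q$ has characteristic $0$, $F$ is separable. So its $nk$ roots $\theta=\theta_1,\dots,\theta_{nk}$ in $\mathbb{C}$ are distinct, and these are the images of $\theta$ under the $nk$ embeddings $\sigma_i$ of $K=\Q(\theta)$. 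By the classical theory, a basis of the solution space is $\{e^{\theta_i x}\}_{i=1}^{nk}$, and the general solution is $y=\sum_i \alpha_i e^{\theta_i x}$.

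The arithmetic input is the second step. By hypothesis, every prime $p\mid \D_F$ satisfies the relevant condition among (i)–(vi) of Theorem \ref{main result1}, so $p\nmid[\Z_K:\Z[\theta]]$. By \eqref{formula}, any prime dividing the index must divide $\D_F$. Hence the index is $1$, and $\Z_K=\Z[\theta]$ with integral basis $1,\theta,\dots,\theta^{nk-1}$.

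Third, I express each root in this basis. Each $\theta_i$ is an algebraic integer. The point needing care is how to place $\theta_i$ in $\Z_K$. Because $\theta_i$ need not lie in $K$ when $K$ is not normal, I intend to read the statement as follows: $\theta_i=\sigma_i(\theta)$, and $\theta$ itself is the chosen root. For the exponents I would use the conjugate basis $\sigma_i(\theta)^{j-1}$, or equivalently write $\theta_i=\sum_{j}c^{(i)}_{j-1}\theta^{j-1}$ whenever $\theta_i\in\Z_K=\Z[\theta]$.

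This placement of the roots is the main obstacle. To handle it most cleanly, I would interpret $\theta$ in the displayed formula as the appropriate conjugate, namely the root generating the field in which $\theta_i$ is expanded. With integers $c^{(i)}_{j-1}$ so obtained, $e^{\theta_i x}=\prod_{j=1}^{nk}e^{c^{(i)}_{j-1}\theta^{j-1}x}$ because the exponential converts sums into products. Substituting into the general solution gives the claimed form.

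The remaining details are routine: the linear independence of distinct exponentials, and the fact that the arbitrary constants may be taken real or complex as the statement intends.
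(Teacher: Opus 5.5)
Your first two steps are the paper's own argument: read \eqref{difft1} as $F(D)y=0$, then use Theorem~\ref{main result1} and \eqref{formula} to get $\Z_K=\Z[\theta]$. The gap is your third step, which you flag but do not close.

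In the statement, $\theta$ is one fixed root. So $\prod_j e^{c^{(i)}_{j-1}\theta^{j-1}x}=e^{\gamma_i x}$ with $\gamma_i=\sum_j c^{(i)}_{j-1}\theta^{j-1}\in\Z[\theta]\subseteq K$. For these exponentials to span the solution space, linear independence of distinct exponentials forces every root $\theta_i$ of $F$ to equal some $\gamma_i$, and hence to lie in $K$. That is exactly normality of $K/\Q$, which is neither assumed nor implied. Reading $\theta$ in the $i$-th term as $\sigma_i(\theta)=\theta_i$, or using the conjugate basis $\sigma_i(\theta)^{j-1}$, is not an equivalent reformulation. It turns the claim into the tautology $\theta_i=1\cdot\theta_i$ (take $c^{(i)}_1=1$ and all other coefficients $0$), which holds for every irreducible $F$. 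Your second step is then never used, which shows you are no longer proving the stated theorem.

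In fact the step cannot be repaired, because the statement is false without normality. Take $k=1$, $b=0$, $n=3$, $a=1$, $c=2$, so $F(z)=z^3+2z^2+2z+2$. This is $2$-Eisenstein with $\D_F=-44$. The prime $p=2$ satisfies (i). The prime $p=11$ satisfies (vi), since $11^2\nmid f_1(0)=2$, $11\nmid 1+a^ncn=7$ and $11^2\nmid 539=7^2\cdot 11$. So all hypotheses hold. But $\D_F<0$, so $F$ has one real root $\theta$ and two non-real roots, and these cannot lie in $\Z[\theta]\subset\mathbb{R}$. Thus $e^{\theta_2 x}$ is a solution not of the asserted form.

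The paper's sextic example $x^6+8x^4+22x^2+22$ fails in the same way. Put $\beta=\theta^2+2$. If $K$ were normal, it would be the $S_3$-closure $\Q(\beta,\sqrt{-11})$ of $\Q(\beta)$. That forces $-11(\beta-2)$ to be a square in $\Q(\beta)$, but its norm $2\cdot 11^4$ is not a rational square.

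The paper's proof has the same hole: it simply asserts that every root of $F(z)=0$ can be written in the basis $1,\theta,\dots,\theta^{nk-1}$. The fix is to add the hypothesis that $K=\Q(\theta)$ is normal over $\Q$, equivalently that $F$ splits in $K$. Then each $\theta_i$ is an algebraic integer of $K$, hence lies in $\Z_K=\Z[\theta]$, and your argument goes through verbatim. You should also allow the $\alpha_i$ to be complex: when some $\theta_i$ are non-real, real coefficients do not give the general solution.
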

\begin{theorem}{\label{t2}}
    Let \begin{align}{\label{diff}}
     \left(\frac{d^k }{dx^k}+b\right)^ny+c \left(\frac{d^k }{dx^k}+b\right)^{n-1}y+\cdots+ca^{n-1} \left(\frac{d^k }{dx^k}+b\right)+ca^ny=0
    \end{align}
    be a differential equation where $n\geq 1$. Let $F  =(z^k+b)^n + c\sum_{i=1}^{n}\,a^{i-1}\,(z^k+b)^{n-i}$ be the irreducible auxiliary equation of \eqref{diff}
  with a root $\theta$. If for each prime $p$ dividing the discriminant $D_{\mathcal{F}}$
 of $\mathcal{F}(z)$ satisfies any one of the conditions \textup{(i)} to \textup{(v)} of Theorem \ref{main result2}, then the general
 solution of the given differential equation \eqref{diff} is of the form
\begin{align*}
    y(x) = \sum_{i=1}^{nk} \alpha_i\prod_{j=1}^{nk}e^{c_{j-1}^{(i)} \theta^{j-1}x},
\end{align*}
where $c_{j-1}^{(i)}$ are integers and $\alpha_i$ are arbitrary real constants for all $1\leq i,j \leq nk$.
\end{theorem}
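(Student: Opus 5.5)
The plan is to reduce Theorem \ref{t2} to the monogenity statement of Theorem \ref{main result2} and then use the standard theory of constant-coefficient linear ODEs.

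\emph{Step 1: the characteristic equation.} Substituting $y=e^{\lambda x}$ into \eqref{diff} gives $(\tfrac{d^k}{dx^k}+b)^m e^{\lambda x}=(\lambda^k+b)^m e^{\lambda x}$. The equation therefore becomes $\calf(\lambda)e^{\lambda x}=0$, so its characteristic polynomial is $\calf(z)$. We first check that the coefficients of \eqref{diff} match those of $\calf$ up to the indexing of powers of $a$. If they are arranged differently, we only use that the auxiliary polynomial is the stated $\calf$, as the hypothesis asserts.

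\emph{Step 2: distinct roots.} Since $\calf$ is irreducible over $\mathbb{Q}$, it is separable, so its $nk$ roots $\theta_1=\theta,\dots,\theta_{nk}$ are distinct. Hence $\{e^{\theta_i x}\}_{i=1}^{nk}$ is a fundamental system of solutions; Wronskian/Vandermonde nonvanishing gives linear independence. Every solution is then $y=\sum_i \alpha_i e^{\theta_i x}$, with constants taken real or complex as in the statement.

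\emph{Step 3: monogenity.} By hypothesis, every prime $p\mid \D_{\calf}$ satisfies one of (i)--(v) of Theorem \ref{main result2}, so $p\nmid[\mathbb{Z}_K:\mathbb{Z}[\theta]]$. Every prime dividing the index also divides $\D_{\calf}$ by \eqref{formula}, so the index is $1$ and $\mathbb{Z}_K=\mathbb{Z}[\theta]$. Each conjugate $\theta_i$ is an algebraic integer, but $\theta_i$ need not lie in $K$ unless $K$ is normal. So we write $\theta_i=\sigma_i(\theta)$, where $\sigma_i$ is the embedding sending $\theta\mapsto\theta_i$.

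\emph{Step 4: the intended reading.} The exponents $c^{(i)}_{j-1}\theta^{j-1}$ must be read as follows. Either $\theta$ in the $i$-th term stands for $\theta_i$, giving the trivial expansion $\theta_i=\sum_j \delta_{j,2}\theta_i^{j-1}$. Or, more meaningfully, one expresses elements of $\mathbb{Z}_K$ in the power basis $\{1,\theta,\dots,\theta^{nk-1}\}$. Any $\beta\in\mathbb{Z}_K$ then equals $\sum_j c_{j-1}\theta^{j-1}$ with $c_{j-1}\in\mathbb{Z}$, and
\[
e^{\beta x}=\prod_{j=1}^{nk}e^{c_{j-1}\theta^{j-1}x}.
\]
Applying this to $\beta=\theta_i$ when $\theta_i\in\mathbb{Z}_K$, or to $\sigma_i$ of the corresponding identity, yields the displayed form with integer $c^{(i)}_{j-1}$.

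\emph{Main obstacle.} The only delicate point is this interpretive one: the integrality of the coefficients $c^{(i)}_{j-1}$ is exactly what $\mathbb{Z}_K=\mathbb{Z}[\theta]$ provides. In a non-normal $K$, one must apply embeddings to the integral power-basis expansion. The rest is routine.
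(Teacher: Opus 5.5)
Your Steps 1--3 follow the paper's route: Theorem~\ref{main result2} and \eqref{formula} give $\mathbb{Z}_K=\mathbb{Z}[\theta]$, then each root is expanded in the power basis and the exponential is split. Your additions are correct: separability gives the fundamental system $\{e^{\theta_i x}\}$, and the displayed equation's coefficients $ca^{n-1},\,ca^{n}$ do not literally match $\calf$. The problem is Step 4. Your caveat there is right, and it is exactly where the paper's own proof fails. The paper simply asserts that every root of $\calf$ has the form $\sum_j c^{(i)}_{j-1}\theta^{j-1}$ with integer coefficients, which needs every conjugate $\theta_i$ to lie in $K$, i.e.\ $K/\mathbb{Q}$ normal. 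Your embedding repair does not close this gap. Applying $\sigma_i$ to $\beta=\sum_j c_{j-1}\theta^{j-1}$ gives $\sigma_i(\beta)=\sum_j c_{j-1}\theta_i^{j-1}$, which is an expansion in powers of $\theta_i$, not of the fixed root $\theta$ in the statement.

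In fact no repair is possible. For fixed $\theta$, every exponent $\sum_j c^{(i)}_{j-1}\theta^{j-1}$ lies in $K$. Since exponentials with distinct exponents are linearly independent, $e^{\theta_i x}$ is in the span of the proposed functions only if $\theta_i\in K$.

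Concretely, take $k=1$, $b=0$, $n=3$, $a=1$, $c=2$. Then \eqref{diff} is $y'''+2y''+2y'+2y=0$; with $a=1$ the indexing issue disappears. Here $\calf(z)=z^3+2z^2+2z+2$ is $2$-Eisenstein and $\D_{\calf}=-44$. The prime $p=2$ satisfies (i), and $p=11$ satisfies (v), since $nc+a=7$ and $(-n)^n(c-a)^{n+1}-a^nc(n+1)^{n+1}=-539=-7^2\cdot 11$. Yet $-44$ is not a square, so $K$ is not normal. For instance, if $\theta$ is the real root, all your exponents are real, while two roots of $\calf$ are non-real.

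Under the other reading, where $\theta$ in the $i$-th summand means $\theta_i$, the statement is true but trivial, and monogenity is never used. So your argument, like the paper's, proves the statement only under the extra hypothesis that $K/\mathbb{Q}$ is Galois. Then each $\theta_i$ is an algebraic integer of $K$, hence lies in $\mathbb{Z}_K=\mathbb{Z}[\theta]$, and Step 4 goes through. State that hypothesis explicitly rather than presenting the embedding trick as settling the general case.
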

\noindent \textbf{The structure of the paper is as follows:} This article is organized into six sections, including an introduction. In Section~\ref{pre}, we present some preliminary results and introduce Propositions~\ref{discriminant_of_F} and \ref{discriminant_of second polyn}, along with their proofs, which provide discriminant formulas for $F(x)$ and $\calf(x)$, respectively. Section~\ref{section:2} is devoted to the proofs of Theorems~\ref{main result1} and \ref{main result2}. In Section~\ref{ana}, we prove Theorem~\ref{analytic main result}, which establishes a lower bound for the number of pairs of polynomials $f_1(x)$ and $F(x)$ such that both are monogenic. Section~\ref{section:5}, contains proof of Theorems~\ref{t1} and \ref{t2}. Finally, Section~\ref{examples} contains examples that illustrate our results.

\section{Preliminaries}\label{pre}
Let $K$ be a field, and let $f,\,g \in K[x]$ be polynomials of degrees $n$ and $m$, respectively. We write $\ell(f)$ for the leading coefficient of $f$. Assume that both $f$ and $g$ have nonzero discriminants and that they do not have any common roots. Let $\Res(f,g)$ denote the resultant of $f$ and $g$.\footnote{If $\lambda_1,\ldots,\lambda_n$ and $\mu_1,\ldots,\mu_m$ are the roots of $f$ and $g$, respectively, then
\[
\Res(f,\,g)
=
\ell(f)^m \ell(g)^n
\prod_{\substack{1 \le i \le n \\ 1 \le j \le m}}
(\lambda_i-\mu_j).
\]}

We now state a result of Cullinan (see~\cite{cullinan}), which provides a formula for the discriminant of composition $f\circ g$. Although we do not rewrite the proof here, we point out a small correction to the original statement in~\cite{cullinan}: a sign factor $(-1)^{\binom{n}{2}m}$ is missing in the final expression. This issue arises in the transition from the resultant $\Res(f,f')$ to the discriminant $\D_f$.

\begin{lemma}[\cite{cullinan}] \label{3.1:lemma}
With all all notation as above, the discriminant of the composition
$f \circ g$ is given by
\[
\D_{f \circ g} =
(-1)^{\frac{nm(3nm+n-2m-2)}{2}}
\,\ell(f)^{m-1}
\,\ell(g)^{n(\,nm-m-1)}\,
\D_f^{m}
\,\operatorname{Res}(f \circ g,\, g').
\]
\end{lemma}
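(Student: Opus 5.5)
The plan is to derive the formula from the standard relation between a discriminant and the resultant with the derivative, applied to $h=f\circ g$, and then to split $h'$ by the chain rule. Write $N=nm=\deg h$ and note that $\ell(h)=\ell(f)\ell(g)^n$. The convention is the one fixed in the footnote,
\[
\Res(u,v)=\ell(u)^{\deg v}\ell(v)^{\deg u}\prod(\text{differences of roots}).
\]
We use
\[
\D_h=(-1)^{N(N-1)/2}\,\ell(h)^{-1}\,\Res(h,h'),
\]
together with $h'(x)=f'(g(x))\,g'(x)$ and the multiplicativity $\Res(h,uv)=\Res(h,u)\Res(h,v)$. This gives
\[
\Res(h,h')=\Res\bigl(h,f'\circ g\bigr)\,\Res(h,g'),
\]
and the second factor is already the term $\Res(f\circ g,g')$ appearing in the statement. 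So everything reduces to evaluating $\Res(h,f'\circ g)$ in terms of $\D_f$.

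For that factor we use the root form $\Res(u,v)=\ell(u)^{\deg v}\prod_{u(\beta)=0}v(\beta)$. Let $\lambda_1,\dots,\lambda_n$ be the roots of $f$. Then
\[
h(x)=\ell(f)\prod_i\bigl(g(x)-\lambda_i\bigr),
\]
so the roots $\beta$ of $h$ are exactly the $m$ roots of each $g(x)=\lambda_i$, counted with multiplicity; these are distinct since $\D_h\neq0$ is implicit in the hypotheses. Since $g(\beta)=\lambda_i$ at every such root, we get
\[
\Res(h,f'\circ g)=\ell(h)^{(n-1)m}\prod_{i}f'(\lambda_i)^{m}.
\]
Next, $\prod_i f'(\lambda_i)=\ell(f)^{-(n-1)}\Res(f,f')$, and $\Res(f,f')=(-1)^{n(n-1)/2}\ell(f)\D_f$. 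Substituting, the factor becomes
\[
\ell(f)^{m(n-1)}\ell(g)^{nm(n-1)}\cdot\ell(f)^{-m(n-2)}(-1)^{mn(n-1)/2}\D_f^{m}.
\]

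Combining everything, we multiply by $(-1)^{N(N-1)/2}\ell(h)^{-1}=(-1)^{nm(nm-1)/2}\ell(f)^{-1}\ell(g)^{-n}$. The exponent of $\ell(f)$ is then $m(n-1)-m(n-2)-1=m-1$, and that of $\ell(g)$ is $nm(n-1)-n=n(nm-m-1)$, matching the statement. The sign is $(-1)^{nm(nm-1)/2+mn(n-1)/2}$, and its exponent has the same parity as $\tfrac{nm(3nm+n-2m-2)}{2}$: the difference of the two exponents is $\tfrac{nm(2nm-2m)}{2}=nm^2(n-1)$, which is even.

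The main obstacle is bookkeeping rather than ideas. One must fix a single consistent convention for $\Res$, including the order of arguments, which costs a sign $(-1)^{\deg u\deg v}$ when swapped. One must also avoid double-counting $\ell(g)$ when passing from the root product to $\ell(h)$. This sign tracking in the step $\Res(f,f')\leftrightarrow\D_f$ is exactly where the missing factor $(-1)^{\binom n2 m}$ noted above arises. I would therefore verify the final sign on a test case such as $f=x^2-d$, $g=x^2$ before concluding.
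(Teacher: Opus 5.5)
Your proof is correct and takes the same route the paper implicitly relies on; the paper gives no proof of its own and only cites Cullinan, noting that the sign is lost in passing from $\Res(f,f')$ to $\D_f$. You express $\D_h$ through $\Res(h,h')$, split $h'=(f'\circ g)\,g'$ by multiplicativity, reduce $\Res(f\circ g,f'\circ g)$ to $\ell(g)^{nm(n-1)}\Res(f,f')^m$, and then convert $\Res(f,f')$ to $\D_f$; your factor $(-1)^{mn(n-1)/2}$ at that last step is exactly the sign $(-1)^{\binom{n}{2}m}$ the paper says was dropped.

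One harmless inaccuracy: you do not need the roots of $h$ to be distinct, since the root formula for the resultant holds with multiplicity. Also, the stated hypotheses do not actually force $\D_h\neq 0$; for example, $f=x-1$ and $g=x^2+1$ give $h=x^2$.
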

\begin{proposition}
Let $f_1(x) =x^n+c\sum_{i=1}^n(ax)^{n-i}$ and $g(x) = x^k + b$ be polynomials in $\mathbb{Z}[x]$.  
Let $F(x) = f_1\big(g(x)\big)$ be an irreducible polynomial.  
Then the discriminant $\D_{F}$ of $F$ is given by
\begin{equation} \label{discriminant_of_F}
\D_F =\pm\, k^{nk}\,c^{n-1}\,\left(b^n+c\sum_{i=1}^{n}(ab)^{n-i}\right)^{k-1}\frac{n^n(1-a^nc)^{n+1}+a^nc(n+1)^{n+1}}{(1+a^ncn)^2}
\end{equation}
\end{proposition}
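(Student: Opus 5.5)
Apply Lemma~\ref{3.1:lemma} with $f=f_1$ (so $\ell(f)=1$, degree $n$) and $g(x)=x^k+b$ (so $\ell(g)=1$, degree $m=k$). It gives
\[
\D_F=\pm\,\D_{f_1}^{\,k}\,\Res(F,g').
\]
Two ingredients are then needed: the discriminant $\D_{f_1}$, and the resultant $\Res(F,kx^{k-1})$.

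\textbf{The resultant.} Since $F$ is monic of degree $nk$ and $g'=kx^{k-1}$, we have
\[
\Res(F,g')=\pm k^{nk}\prod_{F(\theta)=0}\theta^{k-1}=\pm k^{nk}\,F(0)^{k-1}.
\]
Moreover $F(0)=f_1(b)=b^n+c\sum_{i=1}^n(ab)^{n-i}$. This produces exactly the factors $k^{nk}$ and $\bigl(b^n+c\sum_{i=1}^n(ab)^{n-i}\bigr)^{k-1}$ in the claimed formula.

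\textbf{The discriminant of $f_1$.} Sum the geometric series: for $ax\neq 1$,
\[
f_1(x)=x^n+c\,\frac{(ax)^n-1}{ax-1}.
\]
Hence $(ax-1)f_1(x)=G(x)$, where
\[
G(x)=ax^{n+1}-x^n+ca^nx^n-c=ax^{n+1}+(a^nc-1)x^n-c.
\]
This is a trinomial of degree $n+1$, and its discriminant follows from the classical trinomial formula $\D(x^{N}+Ax^{M}+B)$ after scaling out the leading coefficient $a$. The cleanest route is $\D(G)=\D(f_1)\cdot\Res(f_1,ax-1)^2\cdot(\text{leading-coefficient factors})$, using $\Res(ax-1,f_1)=\pm a^n f_1(1/a)$. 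Evaluating,
\[
a^nf_1(1/a)=1+c\,a^n\cdot n=1+a^ncn,
\]
which accounts for the denominator $(1+a^ncn)^2$. The trinomial discriminant for $x^{n+1}+Ax^n+B$ with $M=n$, $N=n+1$ is $\pm B^{n-1}\bigl(N^NB+(-1)^{N-1}(N-1)^{N-1}A^N\bigr)$, up to sign. Applying it to $G$, with the appropriate powers of $a$, gives
\[
\D_{f_1}=\pm\,c^{n-1}\,\frac{n^n(1-a^nc)^{n+1}+a^nc(n+1)^{n+1}}{(1+a^ncn)^2}.
\]
I would double-check this against the Harrington--Jones formula cited in the introduction, which may simply be quoted instead.

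\textbf{Main obstacle: the exponent of $\D_{f_1}$.} The bookkeeping of the $a$-powers and signs in the trinomial computation is the main work; I would verify it on small $n$ (e.g.\ $n=2$) as a check. Separately, the claimed formula shows $c^{n-1}$ and the bracket only to the first power, whereas the direct application of Lemma~\ref{3.1:lemma} gives $\D_{f_1}^{\,k}$. I would carry out the combination honestly and record the resulting exponent, noting that only the set of prime divisors of $\D_F$ matters for Theorem~\ref{main result1}. The sign is absorbed into $\pm$ throughout.
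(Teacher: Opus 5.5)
Your route is the paper's. You apply Lemma~\ref{3.1:lemma} with $\ell(f_1)=\ell(g)=1$, then use $\Res(F,kx^{k-1})=\pm k^{nk}F(0)^{k-1}$ with $F(0)=f_1(b)$, and then compute $\D_{f_1}$. The only difference is in the last step. The paper quotes Harrington--Jones for $\D_{f_1}$, while you derive it from $(ax-1)f_1(x)=ax^{n+1}+(a^nc-1)x^n-c$ and $\D(PQ)=\D(P)\D(Q)\Res(P,Q)^2$.

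Your derivation works, and the $a$-bookkeeping you worried about is clean. The substitution $x=y/a$ gives $\D(aX^{N}+AX^{N-1}+B)=\pm B^{N-2}\bigl(N^Na^{N-1}B+(-1)^{N-1}(N-1)^{N-1}A^N\bigr)$. With $N=n+1$, $A=a^nc-1$, $B=-c$ this is exactly $\pm c^{n-1}\bigl(n^n(1-a^nc)^{n+1}+a^nc(n+1)^{n+1}\bigr)$. Note that $1+a^ncn=a^nf_1(1/a)\neq 0$, because the irreducible $f_1$ has no rational root. The case $a=0$ is just $\D(x^n+c)$.

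Your worry about the exponent is justified, and you should state it as a conclusion rather than a caveat. The argument proves, and the truth is,
\[
\D_F=\pm k^{nk}f_1(b)^{k-1}\D_{f_1}^{\,k}=\pm k^{nk}c^{k(n-1)}f_1(b)^{k-1}\left(\frac{n^n(1-a^nc)^{n+1}+a^nc(n+1)^{n+1}}{(1+a^ncn)^2}\right)^{k}.
\]
So the displayed formula of the proposition is false for every $k\ge 2$. It is off by the factor $\D_{f_1}^{k-1}$, and $|\D_{f_1}|>1$ by Minkowski.

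The paper's proof contains exactly this slip. It correctly reaches $\D_F=\pm\D_f^{\,k}k^{nk}f(b)^{k-1}$, then substitutes the Harrington--Jones value of $\D_f$ without the $k$-th power. The paper's own example $n=3$, $k=2$, $a=1$, $b=c=2$ exhibits the error. There $\D_{f_1}=-44$, and $F=x^6+8x^4+22x^2+22=h(x^2)$ with $h(y)=f_1(y+2)$. Hence $\D_F=(-4)^3h(0)\D_{f_1}^2=-64\cdot 22\cdot 44^2=-2^{11}\cdot 11^3$, not $\pm 2^9\cdot 11^2$.

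As you note, the set of primes dividing $\D_F$ is unchanged, so Theorem~\ref{main result1}, which is a prime-by-prime statement, is unaffected. Anything that reads squarefreeness off the formula is affected, however. The same slip occurs in the formula for $\D_{\calf}$. In the $\calf$ example with $b=1$, the true discriminant is $-2^{10}\cdot 7\cdot 11^2$. So $p=11$ cannot be dismissed via \eqref{formula} and has to be checked with Theorem~\ref{main result2}(v); it passes, since $11^2\nmid 539$.
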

\begin{proof}
Let $F=f_1\circ g$, Then Lemma \ref{3.1:lemma} gives
\begin{align*}
\D_F
&= (-1)^{\frac{nk(3nk+n-2k-2)}{2}}\,
   \D_f^k\, \operatorname{Res}(f\circ g,\, g^{\prime}) \\
&= (-1)^{\frac{nk(3nk+n-2k-2)}{2}}\,
   \D_f^k \, k^{nk} \,(-1)^{nk(k-1)}\,
   \prod_{i=1}^{k-1} f(g(0)).
\end{align*}
Since $g(0)=b$, this simplifies to
\[
\D_F
= (-1)^{\frac{nk(3nk+n-2k-2)}{2}}\,
  \D_f^{\,k} \, k^{nk} \, f(b)^{\,k-1}.
\]
The discriminant of the trinomial $f_1(x)$ is known explicitly, due to Harrington and Jones~\cite[Theorem 3]{HarringtonJones2021}. Substituting this expression for $\D_f$ into the above formula, we obtain
\[
\D_F
= \pm\,
 k^{nk}\,c^{n-1}\,\left(b^n+c\sum_{i=1}^{n}(ab)^{n-i}\right)^{k-1}\,\frac{[n^n(1-a^nc)^{n+1}+a^nc(n+1)^{n+1}]}{(1+a^ncn)^2}.
\]
This completes the proof.
\end{proof}

\begin{proposition}
Let $f_2(x) =x^n+c\sum_{i=1}^na^{i-1}\,x^{n-i}$ and $g(x) = x^k + b$ be polynomials in $\mathbb{Z}[x]$.  
Let $\calf(x) = f_2\big(g(x)\big)$ be an irreducible polynomial.  
Then the discriminant $\D_{F}$ of $\calf$ is given by
\begin{equation} \label{discriminant_of second polyn}
\D_{\calf} =\pm k^{nk}\,a^{(n-1)(n-2)}\, c^{n-1}\,\left(b^n+c\sum_{i=1}^{n}a^i\,b^{n-i}\right)^{k-1}\frac{[(-n)^n(c-a)^{n+1} - a^nc~(n+1)^{n+1}]}{(nc+a)^2}
\end{equation}
\end{proposition}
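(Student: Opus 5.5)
The plan is to follow exactly the route used for the discriminant of $F$ in the preceding proposition. I will apply Lemma~\ref{3.1:lemma} with outer polynomial $f=f_2$ (degree $n$) and inner polynomial $g(x)=x^k+b$ (degree $m=k$). Both are monic, so $\ell(f)=\ell(g)=1$ and all leading-coefficient factors vanish. This leaves
\[
\D_{\calf}=(-1)^{\frac{nk(3nk+n-2k-2)}{2}}\,\D_{f_2}^{\,k}\,\operatorname{Res}(\calf,\,g').
\]

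\emph{Step 1: the resultant.} Since $g'(x)=kx^{k-1}$, its only root is $0$, with multiplicity $k-1$, and its leading coefficient is $k$. Writing the resultant as a product over the roots of $g'$ gives
\[
\operatorname{Res}(\calf,g')=\pm\,k^{nk}\,\calf(0)^{k-1}=\pm\,k^{nk}\,f_2(b)^{k-1}.
\]
The last equality uses $g(0)=b$. This produces the factor $k^{nk}$ and the $(k-1)$-st power in the statement. Here
\[
f_2(b)=b^n+c\sum_{i=1}^n a^{i-1}b^{n-i}.
\]
I read the printed bracket $b^n+c\sum a^i b^{n-i}$ as this value $f_2(b)$. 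That is the only quantity the lemma can produce, and it mirrors the factor $f_1(b)$ appearing in the formula for $\D_F$.

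\emph{Step 2: the discriminant of $f_2$.} Next I import the closed form of $\D_{f_2}$ from Harrington and Jones~\cite{HarringtonJones2021}. Up to sign it equals
\[
a^{(n-1)(n-2)}c^{n-1}\,\frac{(-n)^n(c-a)^{n+1}-a^nc(n+1)^{n+1}}{(nc+a)^2}.
\]
To make the plan self-contained, I would sketch why this holds. The key identity is $(x-a)f_2(x)=x^{n+1}+(c-a)x^n-ca^n$, a trinomial. Its discriminant follows from the standard trinomial formula. One then removes the contribution of the extra root $a$ via
\[
\D_{(x-a)f_2}=\D_{f_2}\,\bigl(f_2(a)\bigr)^2 .
\]
Since $f_2(a)=a^{n-1}(a+nc)$, dividing by its square yields exactly the denominator $(nc+a)^2$ and the power $a^{(n-1)(n-2)}$. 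Combining Steps 1 and 2 then gives the displayed expression.

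\emph{Main obstacle.} The substantive issue is the bookkeeping between the composition formula and the stated expression. Lemma~\ref{3.1:lemma} yields $\D_{f_2}^{\,k}$, whereas the stated formula contains $\D_{f_2}$ only once. To recover the statement as written, I would follow the same convention used in the proof of the previous proposition for $F$, where the substitution of the Harrington--Jones discriminant is recorded to first power. I would explicitly flag that the literal identity requires this reading, or $k=1$. Everything else is a routine sign computation. All signs are absorbed into the $\pm$, so I will not track the parity of
\[
\tfrac{nk(3nk+n-2k-2)}{2}+nk(k-1).
\]
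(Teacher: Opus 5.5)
Your computation follows the paper's route exactly: Lemma~\ref{3.1:lemma} with $f_2$ and $g$ monic, then $\operatorname{Res}(\calf,kx^{k-1})=\pm k^{nk}f_2(b)^{k-1}$, then the Harrington--Jones value of $\D_{f_2}$. Your reconstruction of that value via $(x-a)f_2(x)=x^{n+1}+(c-a)x^n-ca^n$ and $f_2(a)=a^{n-1}(a+nc)$ is right. So is your reading of the printed bracket as $f_2(b)=b^n+c\sum_{i=1}^n a^{i-1}b^{n-i}$.

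\textbf{The gap is your last step.} No convention lets you record $\D_{f_2}^{\,k}$ as a single copy of the Harrington--Jones expression. The paper's proof for $F$ makes exactly this substitution, and it is an error there; this proposition, proved ``by a similar argument'', inherits it. The displayed identity is therefore false for $k\ge 2$ and cannot be proved. Take the paper's own Example: $n=3$, $k=2$, $a=b=1$, $c=2$. Then $\D_{f_2}=-44$ and $f_2(1)=7$. Since $\calf(x)=h(x^2)$ with $h(y)=f_2(y+1)$, one gets directly $\D_{\calf}=(-4)^3h(0)\D_h^2=-2^{10}\cdot 7\cdot 11^2$. The printed formula gives $\pm 2^{8}\cdot 7\cdot 11$. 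More generally, take $a=1$, where the $a^i$ versus $a^{i-1}$ typo is harmless. Irreducibility of $\calf$ forces $f_2$ to be irreducible of degree $n\ge 2$, so $|\D_{f_2}|>1$ by Minkowski. Hence $\D_{f_2}^{\,k}\neq\pm\D_{f_2}$ for every $k\ge 2$.

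What your argument actually proves, and what you should state instead of flagging a ``reading'', is
\[
\D_{\calf}=\pm\,k^{nk}\left(a^{(n-1)(n-2)}c^{n-1}\,\frac{(-n)^n(c-a)^{n+1}-a^nc\,(n+1)^{n+1}}{(nc+a)^2}\right)^{k}\left(b^n+c\sum_{i=1}^{n}a^{i-1}b^{n-i}\right)^{k-1}.
\]
This agrees with the printed formula only for $k=1$. Once the bracket is corrected, only exponents change, so the set of primes dividing $\D_{\calf}$ is as the printed formula predicts. Prime-by-prime criteria such as Theorem~\ref{main result2} are therefore unaffected. However, any step that uses the exact power of a prime in $\D_{\calf}$ is not supported by the printed formula. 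An example is the Example's deduction that $11$ cannot divide the index because $11$ exactly divides $\D_{\calf}$.
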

\begin{proof}
 Using an argument similar to the above proposition, we get the desired results.
\end{proof}

We recall the well-known Dedekind criterion, which will be used throughout this paper. The equivalence of statements~(i) and~(ii) in Lemma~\ref{dedekind} goes back to Dedekind (see, for example, \cite[Theorem~6.1.4]{HC} and \cite{RD}). The equivalence of~(i) and~(iii) was later established by Uchida~\cite{Uchida77}.

\begin{lemma}\label{dedekind}
Let $f(x) \in \mathbb{Z}[x]$ be an irreducible monic polynomial having factorization $\overline{f}(x) = \overline{g}_1(x)^{e_1} \cdots \overline{g}_{t}(x)^{e_{t}}$ modulo a prime $p$, where $\overline{g}_i(x)$ are distinct irreducible polynomials over $\mathbb{Z}/p\mathbb{Z}$, and $g_i(x) \in \mathbb{Z}[x]$ are their monic lifts.  Let $K = \mathbb{Q}(\theta)$ with $\theta$ a root of $f(x)$. Then the following statements are equivalent:
\begin{itemize}
    \item[\textup{(i)}] $p \nmid [\mathbb{Z}_K : \mathbb{Z}[\theta]]$.
    \item[\textup{(ii)}] For each $i$, either $e_i = 1$ or $\overline{g}_i(x) \nmid \overline{M}(x)$,  
    where
    \[
    M(x) = \frac{1}{p}\left(f(x) - g_1(x)^{e_1} \cdots g_t(x)^{e_t}\right).
    \]
    \item[\textup{(iii)}] $f(x) \notin \langle p, g_i(x) \rangle^2$ in $\mathbb{Z}[x]$ for any $i$, $1 \leq i \leq t$.
\end{itemize}
\end{lemma}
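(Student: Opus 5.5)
We plan to route both equivalences through condition~(iii), so that (i)$\Leftrightarrow$(iii) is the ring-theoretic step and (ii)$\Leftrightarrow$(iii) is an explicit computation. Write $R=\mathbb{Z}[\theta]\cong \mathbb{Z}[x]/(f(x))$ and $\mathfrak{m}_i=\langle p,g_i(x)\rangle\subset\mathbb{Z}[x]$. Since $\mathbb{Z}[x]/\mathfrak{m}_i\cong \mathbb{F}_p[x]/(\overline{g}_i)$ is a field, each $\mathfrak{m}_i$ is maximal. Because $f\equiv\prod g_j^{e_j}\pmod p$, the maximal ideals of $\mathbb{Z}[x]$ containing $(p,f)$ are exactly the $\mathfrak{m}_i$. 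Hence the primes of $R$ lying over $p$ are exactly $P_i=\mathfrak{m}_i/(f)=(p,g_i(\theta))$.

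\textbf{Step 1: (i)$\Leftrightarrow$(iii).} The index $[\mathbb{Z}_K:R]$ is prime to $p$ iff $R\otimes\mathbb{Z}_{(p)}$ is integrally closed. Since $R$ is a one-dimensional Noetherian domain whose normalization is $\mathbb{Z}_K$, this holds iff every localization $R_{P_i}$ is a discrete valuation ring, i.e.\ a regular local ring. Now $A_i:=\mathbb{Z}[x]_{\mathfrak{m}_i}$ is a two-dimensional regular local ring, and its maximal ideal $\mathfrak{m}_iA_i$ has regular system of parameters $p,\,g_i$. A quotient $A_i/(f)$ by a nonzero element is regular iff $f\notin \mathfrak{m}_i^2A_i$. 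Finally, $\mathfrak{m}_i^2$ is $\mathfrak{m}_i$-primary, because $\mathfrak{m}_i$ is maximal. Therefore $\mathfrak{m}_i^2A_i\cap\mathbb{Z}[x]=\mathfrak{m}_i^2$, so the local condition is equivalent to $f\notin\langle p,g_i\rangle^2$ in $\mathbb{Z}[x]$. This gives (i)$\Leftrightarrow$(iii).

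\textbf{Step 2: (ii)$\Leftrightarrow$(iii).} Write $f=\prod_j g_j^{e_j}+pM$, and work in $V=\mathfrak{m}_iA_i/\mathfrak{m}_i^2A_i$, a two-dimensional vector space over the residue field with basis $\bar p,\bar g_i$.

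\emph{Case $e_i\ge 2$.} Here $\prod_j g_j^{e_j}\in\mathfrak{m}_i^2$, so $f\in\mathfrak{m}_i^2$ iff $pM\in\mathfrak{m}_i^2$. The image of $pM$ in $V$ is $\overline{M}\cdot\bar p$, which vanishes iff $M\in\mathfrak{m}_i$. This in turn holds iff $\overline{g}_i\mid\overline{M}$ in $\mathbb{F}_p[x]$.

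\emph{Case $e_i=1$.} Set $u=\prod_{j\ne i}g_j^{e_j}$. Then $u\notin\mathfrak{m}_i$, since $\overline{g}_j$ and $\overline{g}_i$ are coprime for $j\ne i$. So $f=u\,g_i+pM$ has image $\bar u\,\bar g_i+\overline{M}\,\bar p$ in $V$, whose $\bar g_i$-coefficient is nonzero. Hence $f\notin\mathfrak{m}_i^2$ automatically.

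Combining the two cases, (iii) holds for index $i$ exactly when $e_i=1$ or $\overline{g}_i\nmid\overline{M}$, which is (ii).

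We also need to check that the conditions do not depend on the chosen monic lifts $g_i$. For (iii) this is clear, since $\langle p,g_i\rangle$ depends only on $\overline{g}_i$. For (ii), changing lifts alters $M$ by an element of $p\mathbb{Z}[x]$ plus terms divisible by $g_i$ whenever $e_i\ge2$. This follows by expanding $(g_i+ph)^{e_i}$, and it does not affect whether $\overline{g}_i\mid\overline{M}$ when $e_i\ge 2$.

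\textbf{Main obstacle.} The delicate point is Step 1, namely the identification ``$p\nmid$ index $\iff$ all $R_{P_i}$ are DVRs''. It needs finiteness of $\mathbb{Z}_K$ over $R$ and the fact that normality is local, so that localizing at primes away from $p$ changes nothing $p$-adically. It also needs the contraction argument for $\mathfrak{m}_i^2$. If one prefers to avoid regular local rings, the fallback for (i)$\Leftrightarrow$(ii) is Dedekind's classical argument: take $\gamma=\frac{1}{p}\prod_{j}g_j^{e_j-1}(\theta)\cdot\prod_{j\neq i}g_j(\theta)$ and show $\gamma\in\mathbb{Z}_K\setminus\mathbb{Z}[\theta]$ exactly when $e_i\ge2$ and $\overline{g}_i\mid\overline{M}$. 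The converse direction then goes through a norm and valuation count.
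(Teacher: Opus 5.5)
Your proof is correct, but it takes a different route from the paper. The paper gives no proof of this lemma: it cites the classical Dedekind criterion (Cohen's book, Theorem 6.1.4) for (i)$\Leftrightarrow$(ii), and Uchida for (i)$\Leftrightarrow$(iii).

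You prove (i)$\Leftrightarrow$(iii) directly by local algebra. The chain is: $p\nmid[\mathbb{Z}_K:\mathbb{Z}[\theta]]$ iff $\mathbb{Z}[\theta]_{(p)}$ is normal, iff each $A_i/(f)$ is a DVR, iff $f\notin\mathfrak{m}_i^2A_i$. The primary-ideal contraction then brings this back to $\mathbb{Z}[x]$. This is essentially Uchida's theorem, phrased via regular local rings. You then obtain (ii) from (iii) by a short computation in $\mathfrak{m}_i/\mathfrak{m}_i^2$ with basis $\bar p,\bar g_i$.

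The steps check out:
\begin{itemize}
\item the only primes of $\mathbb{Z}[\theta]$ over $p$ are the $P_i$, because $f$ is monic and so $f\notin(p)$;
\item $\mathfrak{m}_i$ has height $2$ and two generators, so $A_i$ is regular, and $f\in\mathfrak{m}_i$;
\item when $e_i=1$, the $\bar g_i$-coordinate $\bar u$ of $f$ is nonzero in the residue field.
\end{itemize}

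What your route buys is a short, self-contained, conceptual argument. It makes (ii)$\Leftrightarrow$(iii) pure linear algebra, the same bookkeeping the paper does by hand in Section 3 when it tests $F\in\langle p,g\rangle^2$. It also works verbatim over any Dedekind base ring.

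What the classical proof of (i)$\Leftrightarrow$(ii) buys is constructiveness. This is your ``fallback'', done in Cohen via the multiplier ring of the $p$-radical. When the criterion fails, it exhibits an explicit element of the form $\frac{1}{p}U(\theta)$ in $\mathbb{Z}_K\setminus\mathbb{Z}[\theta]$ and gives the exact index of the enlarged order. The cost is a longer elementary computation.

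Your remark about independence of the lifts is harmless but unnecessary, since the statement fixes the lifts $g_i$.
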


\begin{proposition}
Let $\calf(x)=(x^k+b)^n + c\sum_{i=1}^{n}a^{i-1}\,(x^k+b)^{n-i}\in\Z[x]$ be an irreducible polynomial of degree $nk$ over $\mathbb{Q}$. Let $K=\mathbb{Q}(\theta)$, where $\theta$ is a root of $\calf(x)$. Suppose that $p$ is a prime such that $p\mid a$ and $p\nmid c$. Then $\calf(x)$ is non-monogenic.
\end{proposition}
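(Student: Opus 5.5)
The plan is to use Uchida's form of Dedekind's criterion, the equivalence (i)$\Leftrightarrow$(iii) of Lemma \ref{dedekind}. It suffices to find one monic irreducible factor $\overline{g}(x)$ of $\overline{\calf}(x)$ modulo $p$ with $\calf(x)\in\langle p, g(x)\rangle^2$ in $\mathbb{Z}[x]$. This gives $p\mid[\mathbb{Z}_K:\mathbb{Z}[\theta]]$, so $\mathbb{Z}[\theta]\neq\mathbb{Z}_K$ and $\calf(x)$ is not monogenic. Throughout we use the standing assumption $n\ge 3$ from the abstract.

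\textbf{Step 1: reduction mod $p$.} Put $y=x^k+b$, so that
\[
\calf(x)=y^{n}+c\,y^{n-1}+\sum_{i=2}^{n}c\,a^{i-1}y^{n-i}.
\]
Since $p\mid a$, every term with $i\ge 2$ vanishes mod $p$. Hence
\[
\overline{\calf}(x)=\overline{(x^k+b)}^{\,n-1}\,\overline{(x^k+b+c)}.
\]
Since $x^k+b$ is monic of degree $k\ge1$, it has some monic irreducible factor $\overline{g}(x)$ over $\mathbb{F}_p$. This $\overline g$ divides $\overline{\calf}$ with exponent at least $n-1\ge 2$, so the criterion is non-vacuous.

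\textbf{Step 2: show $\calf\in I^2$, where $I=\langle p,g(x)\rangle$.} Since $\overline g\mid\overline{x^k+b}$, we can write $y=g(x)q(x)+p\,r(x)$ with $q,r\in\mathbb{Z}[x]$, so $y\in I$. Now check the terms one at a time:
\begin{itemize}
\item $y^{n-1}(y+c)\in I^{n-1}\subseteq I^2$, because $n-1\ge2$.
\item $c\,a\,y^{n-2}\in pI\subseteq I^2$, because $p\mid a$ and $n-2\ge1$.
\item For $3\le i\le n$, $c\,a^{i-1}y^{n-i}\in p^2\mathbb{Z}[x]\subseteq I^2$, because $p^2\mid a^{i-1}$. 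This includes the constant term $c\,a^{n-1}$.
\end{itemize}
Summing the terms gives $\calf(x)\in I^2$. By Lemma \ref{dedekind}(iii), $p$ divides the index. (The same conclusion follows from (ii): the remainder $M(x)$ is divisible by $\overline g$.) In particular $p\mid \D_{\calf}$, which is consistent with the factor $a^{(n-1)(n-2)}$ in \eqref{discriminant_of second polyn}.

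\textbf{Main obstacle.} The only delicate point is the range of exponents. For the middle term we need $n-2\ge1$, and for $c\,a^{n-1}\in p^2\mathbb{Z}[x]$ we need $n-1\ge2$; both rely on $n\ge3$. For $n=2$ the argument breaks down, since $c\,a\in p\mathbb{Z}[x]$ need not lie in $I^2$. The hypothesis $p\nmid c$ is not needed for membership in $I^2$. It only explains why the exponent of $\overline g$ is exactly $n-1$ rather than $n$, and why the case $p\mid c$ is treated separately in Theorem \ref{main result2}.
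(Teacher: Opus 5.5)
Your proof is correct and follows the same route as the paper. Both arguments reduce to $\overline{\calf}=\overline{(x^k+b)}^{\,n-1}\,\overline{(x^k+b+c)}$ modulo $p$, take a repeated factor coming from $x^k+b$, and use $p\mid a$ together with $n\ge 3$ to get $\calf\in\langle p,g\rangle^2$, which gives the conclusion through Uchida's form of Dedekind's criterion. Your version is more careful than the paper's in two ways: you work with an arbitrary irreducible factor $\overline{g}$ of $\overline{x^k+b}$, whereas the paper's $x-\beta$ presumes a root in $\mathbb{F}_p$, and you check the ideal membership term by term where the paper only calls it easy to verify.
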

\begin{proof}
Suppose $p\nmid c$ and $p\mid a$. Then $\calf (x)\equiv(x^k+b)^{n-1}(x^k+b+c)\Mod{p}$. Let $\beta$ be any root of $x^k+b$ modulo $p$. Then keeping in mind $n\geq 3$, we conclude that $\beta$ is a repeated root of $\calf (x)$ modulo $p$. Taking into account $p\mid a$, it is easy to verify $\calf(x)\in\langle p,\,x-\beta\rangle^2$. Then, by Lemma~\ref{dedekind}, $\calf (x)$ is non-monogenic.
\end{proof}
The proof of Theorem~\ref{analytic main result} relies on the $abc$-conjecture, which we recall in the following for completeness.
\begin{conjecture}[abc-conjecture]
Let $a,b,c$ be pairwise coprime positive integers satisfying $a+b=c$. For a nonzero integer $n$, define the radical of $n$ by $\operatorname{rad}(n)=\prod_{p\mid n} p,$ where the product runs over all distinct prime divisors $p$ of $n$.  Then, for every $\varepsilon>0$, there exists a constant $K_\varepsilon>0$ such that $c \leq K_\varepsilon \,\operatorname{rad}(abc)^{\,1+\varepsilon}.$
\end{conjecture}
\section{Proof of Theorems~\ref{main result1} and \ref{main result2}}\label{section:2}

\begin{proof}
$\textbf{Case (i).}$ Suppose that $p \mid c$. Then $ F(x) \equiv (x^k + b)^n \pmod p.$ Let $g(x)$ be any irreducible factor of $x^k + b$ modulo $p$. Since $n \geq 2$, each of such factors appears with multiplicity at least $2$ in $F(x)$ modulo $p$. It follows that $ F(x) \in \langle p, g(x) \rangle^2$ if and only if $ p^2 \nmid c.$ Therefore, by Lemma~\ref{dedekind}, we conclude that $ p \nmid [\mathbb{Z}_K : \mathbb{Z}[\theta]]$ if and only if $ p^2 \nmid c.$\\[1mm]
$\textbf{Case (ii).}$ Suppose $p\nmid c$ and $p\mid k$ with $k=p^is'$ such that $p\nmid s$. Then \[F(x)\equiv\left( (x^{s}+b)^n+c\sum_{i=1}^n(a(x^s+b)^{n-i}\right)^{p^{\ell}}\pmod p.\] Write $h(x)=(x^{s}+b)^n+c\sum_{i=1}^n(a(x^s+b)^{n-i}$. Let $g(x)$ be any factor of $h(x)$ modulo $p$. Then $F(x)\in\langle p, g(x)\rangle^2$ if and only if $\frac{1}{p}[F(x)-h(x)^{p^j}]$ and $h(x)$ have a common root. The above conditions seems to be a non-trivial task for checking, but with some restrictions, the above conditions can be simplified as discussed below. Suppose $p\mid a$ and $p\mid n$ with $n=p^js$ such that $p\nmid s$. In this case, we have $F(x)\equiv\left((x^{s'}+b)^s+c\right)^{p^{i+j}}\pmod p$. Let $h(x)=(x^{s'}+b)^s+c$, then $F(x)\equiv h(x)^{p^{i+j}}\pmod{p}$. Write $h(x)=g_1^{e_1}(x)\cdots g_d^{e_d}(x)+pH(x)$, where $g_1(x),\ldots,g_d(x)$ are  monic polynomials that are distinct as well as irreducible modulo $p$ and $H(x)\in\Z[x]$. Since $h(x)=(x^{s'}+b)^s+c$, we have \[(x^{s'} + b)^{\,np^{i}} = (h(x) - c)^{\,p^{i+j}}.\] Using the Binomial expansion with respect to power $p^{i}$ on the left hand side of the above equation, we have $\big(x^k + b^{p^{i}} + pH_1(x)\big)^n = (h(x) - c)^{p^{i}},$ for some polynomial $H_1(x) \in \mathbb{Z}[x]$.
Since $p \mid (b^{\,p^{i}} - b)$, we may rewrite the left-hand side and, applying the Binomial expansion again with respect to the power $n$, we obtain
\[(x^k + b)^n =h(x)^{\,p^{i+j}}+ph(x)H_2(x)+p^2H_3(x)+(-c)^{\,p^{i+j}},\] 
for some polynomials $H_2(x),\, H_3(x) \in \mathbb{Z}[x]$. Using the above identity and the fact that $p \mid a$, it follows that 
\begin{equation}\label{eq:2.1}
    F(x) = h(x)^{\,p^{i+j}}+ph(x)H_4(x)+p^2H_5(x)+ca(x^k+b)+c+(-c)^{\,p^{i+j}},
\end{equation}
 for some polynomials $H_4(x), H_5(x) \in \mathbb{Z}[x]$. As $
 i+j\geq 1$, the first three  summands on the right-hand side of \eqref{eq:2.1} belong to $\langle p,\, g_i(x) \rangle^{2}$ for each $i$, $1\leq i\leq d$. Therefore, for any $i \in \{1,2, \ldots,d\}$, $F(x)\in\langle p,\, g_i(x) \rangle^{2}$ if and only if the polynomial $ca(x^k+b)+c+(-c)^{\,p^{i+j}}$ does so,  that is, the polynomials $\frac{1}{p}\left[ca(x^m+b)+c+(-c)^{\,p^{i+j}}\right]$ and $(x^k+b)^n+c$ have a common root modulo $p$. Write $a=pa_1$ and $c+(-c)^{\,p^{i+j}}=pc_2$. Then, it is equivalent to checking that the polynomials $ca_1(x^k+b)+c_2$ and $(x^k+b)^n+c$ have a common root modulo $p$. These polynomials have a common root modulo $p$ if and only if $p$ divides $[c^{n+1}a_1^n+(-c_2)^n]$. Therefore, by Lemma~\ref{dedekind}, $p\nmid [\Z_K:\Z[\theta]]$ if and only if $p$ does not divide $[c^{n+1}a_1^n+(-c_2)^n]$.\\[1mm]
\textbf{Case (iii).} Suppose $p\nmid ck,\,p\mid a$ and $p\mid n$. Write $n=p^js$, where $p\nmid s$. Then $F(x)\equiv(x^k+b)^{s}+c\pmod p$. Denote $h(x) = (x^k+b)^s + c.$ Then  $F(x) \equiv h(x)^{\,p^{j}} \pmod p.$ Now write $h(x) \equiv g_1(x)^{e_1}\cdots g_d(x)^{e_d} \pmod p,$ so that $h(x) = g_1(x)^{e_1} \cdots g_d(x)^{e_d} + pH(x),$ where $g_1(x), \ldots, g_d(x)$ are distinct monic irreducible polynomials modulo $p$, and $H(x) \in \mathbb{Z}[x]$. Using the expression of  $h(x)$ and Binomial expansion, we have $(x^k+b)^n=h(x)^{\,p^{j}}+(-c)^{\,p^{j}}+ph(x)H_1(x)$ for some polynomial $H_1(x)\in\Z[x]$. Now using the expression of $(x^k+b)^s$ and keeping in mind  $p\mid a$, we obtain
\begin{equation}\label{eq:2.2}
   F(x)=h(x)^{\,p^{j}}+ph(x)H_1(x)+p^2H_3(x)+ac(x^k+b)+c+(-c)^{p^{j}}. 
\end{equation}
 As $j\geq 1$, the first three  summands on the right-hand side of the above equation belong to $\langle p,\, g_i(x) \rangle^{2}$ for each $i$, $1\leq i\leq d$. Therefore, for any $i \in \{1,2, \ldots,d\}$, $F(x)\in\langle p,\, g_i(x) \rangle^{2}$ if and only if the polynomial $ac(x^k+b)+c+(-c)^{p^{j}}$ does so,  that is, the polynomials $\frac{1}{p}\left[ac(x^k+b)+c+(-c)^{p^{j}}\right]$ and $(x^k+b)^{s}+c$ have a common root modulo $p$. Write $a=pa_1$ and $c+(-c)^{\,p^j}=pc_2$. Then, it is equivalent to checking that the polynomials $ca_1x^k+c_2$ and $x^{ks}+c$ have a common root modulo $p$. It is easy to verify that the above polynomials have a common root modulo $p$ if and only if $p\mid [(-c_2)^s+c(ca_1)^s]$. Therefore, by Lemma ~\ref{dedekind}, we have $p\nmid [\Z_K:\Z[\theta]]$ if and only if $p \nmid [(-c_2)^s+c(ca_1)^s]$.\\[1mm]
\textbf{Case (iv).} Suppose  $p\nmid ck$. There are two possible choices for $a$ and $n$: $p\mid a$ and $p\nmid n$, or $p\nmid a$ and $p\mid n$.\\[0.5mm]
$\textbf{Subcase (a).}$ Let $p\mid a$ and $p\nmid n$. Then $F(x)\equiv (x^k+b)^n+c \Mod{p}$ and $F'(x)\equiv nkx^{k-1}(x^k+b)^{n-1}\Mod{p}$. Observe that if $\alpha$ is a root of $F(x)$ modulo $p$ and satisfies $(\alpha^k+b)\equiv 0 \Mod{p}$, then we must have $c\equiv 0 \Mod{p}$, which is not possible. Further, since $p\nmid nk$, we note that $\alpha \equiv 0 \Mod{p}$ is the only possible repeated root of $F(x)$ modulo $p$. It is easy to verify that $\a \equiv 0 \Mod{p}$ of $\bar{F}(x)$ if and only if $p\mid F(0)=f_1(b)$ and $k\geq 2$, that is, in this situation $x$ is a repeated factor of $F(x)$ modulo $p$. A direct computation shows that $F(x)\in \langle p,x\rangle^2$ if and only if $p^2\mid f_1(b)$.\\[0.5mm]
$\textbf{Subcase (b).}$ Let $p\nmid a$ and $p\mid n$. Let $\a$ be a repeated root of $F(x)$ modulo $p$. Then

\begin{align}
    F(\alpha) &=(\a^k+b)^{n}+c\sum_{i=1}^n(a(\alpha^k+b))^{n-i} \equiv 0 \Mod{p}, \label{eqn:2.3} \\
    F'(\alpha) &= k\alpha^{k-1} \left(n(\alpha^k+b)^{n-1}+c\sum_{i=1}^{n-1}(n-i)a^{n-i}(\alpha^k+b)^{n-i-1}\right)\equiv0\Mod{p}. \label{eqn:2.4}
\end{align}
In view of $\eqref{eqn:2.3}$ and $\eqref{eqn:2.4}$, we have
\begin{align}
    [a(\a^k+b)-1]\,F(\a)&\equiv a(\a^k+b)^{n+1}-(a^nc-1)(\a^k+b)^n+c, \label{eqn:2.5}\\
    [a(\a^k+b)-1]\,F'(\a)&\equiv
    k\a^{k-1}(\a^k+b)^{n-1}[(n+1)a(\a^k+b)+n(a^nc-1)]\Mod{p}.\label{eqn:2.6}
\end{align} 
Observe that if $\a$ satisfies $(\alpha^k+b)\equiv 0 \Mod{p}$. Then, as above, we obtain $c\equiv 0 \Mod{p}$, which is not possible. Using $p \nmid ka$ along with $p\mid n$, we see that $\a$ satisfies $\a (n+1)a(\a^k+b) \equiv 0 \Mod{p}$. Note that $\alpha \equiv 0 \Mod{p}$ is the only possible repeated root of $F(x)$ modulo $p$. It is easy to verify that $\a \equiv 0 \Mod{p}$ of $\bar{F}(x)$ if  and only if $p $ divides $ F(0) = f_1(b)$ and $k \geq 2$, i.e., in this case $x$ is a repeated factor of $F(x)$ modulo $p$. A simple calculation yields $F(x) \in \langle p,x\rangle^2$ if and only if $p^2 \mid f_1(b)$.\\[1mm]
\textbf{Case (v).} Suppose $p\nmid ckan$ and $p\mid (n+1)$. Let $\a$ be a repeated root of $F(x)$ modulo $p$. Then observe that if $\a$ satisfies $(\a^k+b) \equiv 0 \Mod{p}$ then in view of \eqref{eqn:2.3}, we have $c \equiv 0 \Mod{p}$. This is not possible. Noting this and using $p \nmid k$, we obtain that $\a$ satisfies $\a [(n+1)a(\a^k+b)+n(a^nc-1)] \equiv 0 \Mod{p}$.

Note that $\a \equiv 0 \Mod{p}$ is a repeated root of $F(x)$ modulo $p$ if and only if $p $ divides $ F(0) = f_1(b)$ and $k \geq 2$, i.e., in this case $x$ is a repeated factor of $F(x)$ modulo $p$. A simple calculation yields $F(x) \in \langle p,x\rangle^2$ if and only if $p^2 \mid f_1(b)$. Furthermore, note that $p\mid (n+1)$, and hence from \eqref{eqn:2.6}, we have $p\mid (a^n c-1)$. If $p\mid (a^n c-1)$, then from \eqref{eqn:2.5}, we obtain $a(\alpha^k+b)^{n+1}\equiv -c\Mod{p}$. Since $a\in\F_p^{\times}$, there exists a nonzero integer $\lambda$ such that $a^{-1}\equiv \lambda\Mod{p}$. Thus, we have $(\alpha^k+b)^{n+1}\equiv -\lambda c\Mod{p}$. Assume that $g(x)$ is any repeated factor of $F(x)$ modulo $p$. Then, from the above, $g(x)$ divides $(x^k+b)^{n+1}+\lambda c$ modulo $p$. We now determine the possible choices for $g(x)$. From \eqref{eqn:2.5}, it follows that $(x^k+b)-\lambda$ divides $(x^k+b)^{n+1}+\lambda c$ modulo $p$.

If $g(x)=(x^k+b)-\lambda$ is a repeated factor, then a direct computation shows that $F(x)\in\langle p,g(x)\rangle^2$ if and only if $p^2\mid (1+a^n c n)$.

If $(x^k+b)-\lambda$ is not a repeated factor of $F(x)$ modulo $p$, then $g(x)$ divides $(x^k+b)^{n+1}+\lambda c$ modulo $p$ and $g(x)\not\equiv (x^k+b)-\lambda$. Using the identity
\[
(a(x^k+b)-1)F(x)=a(x^k+b)^{n+1} + (a^n c-1)(x^k+b)^n-c,
\]
we observe that $(a(x^k+b)-1)F(x)\in\langle p,g(x)^2\rangle$ and $(a(x^k+b)-1)F(x)\in\langle p^2,g(x)\rangle$ if and only if $F(x)\in\langle p^2,g(x)\rangle$. Moreover, $(a(x^k+b)-1)F(x)\in\langle p^2,g(x)\rangle$ if and only if $p^2\mid (1-a^n c)$.

Finally, using the identity $\langle p,g(x)\rangle^2=\langle p^2,g(x)\rangle\cap\langle p,g(x)^2\rangle$ and the above arguments, from Lemma~\ref{dedekind}, we conclude that $p\nmid [\Z_K:\Z[\theta]]$ if and only if $p^2\nmid (1-a^n c)$ and $p^2\nmid (1+a^n c n)$.\\[1mm]
\textbf{Case (vi).} Now we consider the last case when $p\nmid cka(n+1)$. Now we show that there exists an integer $\beta$ such that the polynomial $x^k + b -\beta$ is a repeated factor of $F(x)$ over $\mathbb{Z}/p\mathbb{Z}$.\\		
Let $t$ be the maximum power of $p$ dividing $1+a^ncn$ and $\alpha$ be any repeated root of $F(x)$ in the algebraic closure of $\mathbb{Z}/p\mathbb{Z}$. Then using \eqref{eqn:2.6}, we have
$$ k\a (\a^k+b) \left[(n+1)a(\a^k+b)+n(a^nc-1)\right] \equiv 0 \Mod{p}. $$
Observe that if $\a$ satisfies $(\a^k+b) \equiv 0 \Mod{p}$ then in view of \eqref{eqn:2.3}, we have $c \equiv 0 \Mod{p}$. This is not possible. Noting this and using $p \nmid k$, we obtain that $\a$ satisfies $\a [(n+1)a(\a^k+b)+n(a^nc-1)] \equiv 0 \Mod{p}$.

Note that $\a \equiv 0 \Mod{p}$ is a repeated root of $F(x)$ modulo $p$ if and only if $p $ divides $ F(0) = f_1(b)$ and $k \geq 2$, i.e., in this case $x$ is a repeated factor of $F(x)$ modulo $p$. A simple calculation yields $F(x) \in \langle p,x\rangle^2$ if and only if $p^2 \mid f_1(b)$. 

Now for any non-zero repeated root $\a$ of $\bar{F}(x)$, we have $(n+1)a(\a^k+b)+n(a^nc-1) \equiv 0 \Mod{p}$. Using $p\nmid a(n+1)$, we obtain $\alpha^k+b \equiv \frac{-n(a^nc-1)}{a(n+1)} \Mod{p}$. Therefore, there exists an integer in $\mathbb{Z}/p\mathbb{Z}$, say $\beta$ such that		
\begin{equation}\label{eqn:4.7}
    	\beta \equiv \frac{-n(a^nc-1)}{a(n+1)}\Mod{p^{2t+2}}.
\end{equation} 
Thus we have proved that any non-zero repeated root of $\bar{F}(x)$ is a root of $x^k + \bar{b} - \bar{\beta}$. We now show that any $\a_1 \not\equiv 0 \Mod{p}$ satisfying $\a_1^k + b \equiv \beta \Mod{p^{2t+2}}$ is a repeated root of $F(x)$ modulo $p$ if and only if $p^{2t+1}$ divides $ n^n(1-a^nc)^{n+1}+a^nc(n+1)^{n+1} $.
		
Using the above equation, we obtain the following.
\begin{align}\label{eqn:4.8}
	[a(\a_1^k + b)-1] \equiv \frac{n(1-a^nc)}{n+1}-1 \equiv \frac{-(1+a^ncn)}{n+1} \Mod{p^{2t+2}}.
\end{align}
Since $p^t\mid\mid(1+a^ncn)$, in view of the above equation it follows that  $p^t\mid\mid(a(\a_1^k + b)-1)$.


A simple calculation yields $(a(x^k+b)-1)F(x) = ax^{n+1} + (a^nc-1) x^n -c$. Substituting $x= \a_1$, we obtain the following.
\begin{align} \label{eqn:4.9}
	[a(\a_1^k+b)-1]\,F(\a_1)
	\equiv& ~a\left(\frac{n(1-a^nc)}{a(n+1)}\right)^{n+1} +\left(\frac{n(1-a^nc)}{a(n+1)}\right)^n(a^nc-1)-c \Mod{p^{2t+2}}\nonumber \\
	\equiv&\frac{-1}{a^n(n+1)^{n+1}}\,[n^n(1-a^nc)^{n+1} +a^nc(n+1)^{n+1}] \Mod{p^{2t+2}}.
\end{align}
In veiw of \eqref{discriminant_of_F}, note that $p^t$ divides $1+a^ncn$ imply that $p^{2t}$ divides $\left(n^n(1-a^nc)^{n+1}+a^n(n+1)^{n+1}\right)$.
Using this and $p^t\mid\mid (a(\a_1^k + b)-1)$ in \eqref{eqn:4.9}, we obtain $F(\a_1)\equiv 0 \Mod{p^{t}}$. An easy manipulation yields the following equality
\begin{equation} \label{eqn:4.10}
    [ a (x^k+b)-1]\, F'(x) = -akx^{k-1} F(x) + kx^{k-1} (x^k+b)^{n-1} [ a(n+1)(x^k+b) + n (a^nc-1) ].
\end{equation}

Substituting $x=\a_1$ and using \eqref{eqn:4.7} and $\a_1^k + b \equiv \b \Mod{p^{2t+2}}$, we obtain the following.
\begin{equation*}
    [ a (\a_1^k+b)-1 ]\, F'(\a_1) \equiv -ak\a_1^{k-1} F(\a_1) \Mod{p^{t+1}}.
\end{equation*}
Using the above equation and the fact that $p^t \mid\mid \left( a (\a_1^k+b)-1 \right)$, we deduce $F'(\a_1)\equiv 0 \Mod{p}$ if and only if $F(\a_1) \equiv 0 \Mod{p^{t+1}}$. In view of \eqref{eqn:4.9}, we have $F(\a_1) \equiv 0 \Mod{p^{t+1}}$ if and only if $p^{2t+1}$ divides $\left(n^n(1-a^nc)^{n+1}+a^n(n+1)^{n+1}\right)$. Combining both, we conclude that $F(x)$ has a non-zero repeated root modulo $p$ if and only if $p^{2t+1}$ divides $\left(n^n(1-a^nc)^{n+1}+a^n(n+1)^{n+1}\right)$.

Thus, we have shown that $x^k+b-\bar{\beta}$ contains all the irreducible repeated factors $\bar{g}(x) \neq x$ of $\bar{F}(x)$. Let $\bar{\a_1}$ be the non-zero repeated root of $\bar{F}(x)$. So, for a positive integer $\a_1$ that satisfies $\a_1^k + b \equiv \b \Mod{p^{2t+2}}$, we can write
\begin{align} \label{eqn:4.11}
    F(x)=(x^k + b -\beta)^2h_1(x)+(x^k + b -\beta) \frac{F'(\a_1)}{k \a_1^{k-1}} + F(\a_1),
\end{align} 
for some polynomial $h_1(x)\in\mathbb{Z}[x]$. Using Lemma \ref{dedekind}, we have $p \nmid [\mathbb{Z}_K:\mathbb{Z}[\theta]]$ if and only if $F(x) \not\in \langle p,x^k+b-\beta \rangle^2$. 

Now, if $t \geq 1$ and $p^{2t+1}$ divides $\left(n^n(1-a^nc)^{n+1}+a^n(n+1)^{n+1}\right)$, then \eqref{eqn:4.9} implies that $F(\a_1) \equiv 0 \Mod{p^2}$. In addition, $F'(\a_1) \equiv 0 \Mod{p}$ and \eqref{eqn:4.11} imply that $F(x) \in \langle p,x^k + b - \beta \rangle^2$ whenever $t \geq 1$. Hence, we have  $t=0$, i.e., $p \nmid (1+a^ncn)$. 
In this situation,  \eqref{eqn:4.11} and $F'(\a_1) \equiv 0 \Mod{p}$ imply that $F(x)$ belongs to $\langle p,x^k+b-\beta \rangle^2$ if and only if $F(\a_1) \equiv 0 \Mod{p^2}$. Using $p \nmid (1+a^ncn)$ and \eqref{eqn:4.8}, we have $p \nmid \left( a(\a_1^k+b) - 1 \right)$. Therefore, $F(\a_1) \equiv 0 \Mod{p^2}$ if and only if $\left( a(\a_1^k+b) - 1 \right) F(\a_1) \equiv 0$ (mod $p^2$). Using \eqref{eqn:4.9} and the fact that $p\nmid a(n+1)$, we obtain $F(x) \not\in \langle p,x^k+b-\beta \rangle^2$ if and only if $[n^n(1-a^nc)^{n+1}+a^nc(n+1)^{n+1}] \not\equiv 0 \Mod{p^2}$. This completes the proof of the theorem.
\end{proof}
\begin{proof}[Proof of Theorem~\ref{main result2}]
$\textbf{Case (i).}$ Suppose that $p \mid c$. Then $ \calf(x) \equiv (x^k + b)^n \pmod p.$ Let $g(x)$ be any irreducible factor of $x^k + b$ modulo $p$. Since $n \geq 2$, each of such factors appears with multiplicity at least $2$ in $\calf(x)$ modulo $p$. It follows that $ \calf(x) \in \langle p,\, g(x) \rangle^2$ if and only if $ p^2 \nmid c.$ Therefore, by Lemma~\ref{dedekind}, we conclude that $ p \nmid [\mathbb{Z}_K : \mathbb{Z}[\theta]]$ if and only if $ p^2 \nmid a^{n-1}c.$\\[1mm]
\textbf{Case (ii).} Suppose that $p\nmid ca$ and $p\mid k$ with $k=p^js'$ are such that $p\nmid s'$. Then \[\calf(x)\equiv \left((x^{s'}+b)^n+c\sum a^{i-1}(x^{s'}+b)^{n-i}\right)^{p^j}\Mod{p}.\] Denotes $h(x)=(x^s+b)^n+c\sum a^{i-1}(x^s+b)^{n-i}$. Now write $h(x) \equiv g_1(x)^{e_1}\cdots g_d(x)^{e_d} \pmod p,$ so that $h(x) = g_1(x)^{e_1} \cdots g_d(x)^{e_d} + pH(x),$ where $g_1(x), \ldots, g_d(x)$ are distinct monic irreducible polynomials modulo $p$, and $H(x) \in \mathbb{Z}[x]$.\\[1mm]
\textbf{Case (iii).} Suppose $p\nmid cka$ and $p\mid n$.  Let $\a$ be a repeated root of $F(x)$ modulo $p$. Then 
\begin{align}
    \calf(\alpha) &=(\a^k+b)^{n}+c\sum_{i=1}^na^{i-1}\,(\alpha^k+b)^{n-i} \equiv 0 \Mod{p} \label{eqn:4.12} \\
    \calf '(\alpha) &= k\alpha^{k-1} \left(n(\alpha^k+b)^{n-1}+c\sum_{i=1}^{n-1}(n-i)a^{i-1}(\alpha^k+b)^{n-i-1}\right)\equiv0\Mod{p}. \label{eqn:4.13}
\end{align}
In view of $\eqref{eqn:2.3}$ and $\eqref{eqn:2.4}$, we have
\begin{align}
    [(\a^k+b)-a]\,\calf(\a)&\equiv (\a^k+b)^{n+1}+(c-a)(\a^k+b)^n-ca^n \Mod{p}\label{eqn:4.14}\\
    [(\a^k+b)-a]\,\calf '(\a)&\equiv
    k\alpha^{k-1}\,(\alpha^k+b)^{n-1}[(n+1)(\alpha^k+b)+n(c-a)]\Mod{p}.\label{eqn:4.15}
\end{align} 
Using $p\nmid k,\,p\mid n$ and \eqref{eqn:4.15}, we have $ \alpha\,(\alpha^k+b)\equiv 0\Mod{p}$. Observe that if $\a$ satisfies $(\a^k+b) \equiv 0 \Mod{p}$ then in view of \eqref{eqn:4.14}, we have $ca^{n-1} \equiv 0 \Mod{p}$. This is not possible. Note that $\a \equiv 0 \Mod{p}$ is a repeated root of $\calf(x)$ modulo $p$ if and only if $p $ divides $ \calf(0) = f_2(b)$ and $k \geq 2$, i.e., in this case $x$ is a repeated factor of $\calf(x)$ modulo $p$. A simple calculation yields $\calf(x) \in \langle p,x\rangle^2$ if and only if $p^2 \mid f_2(b)$.\\[1mm]
\textbf{Case (iv).} Suppose $p\nmid cak$ and $p\mid (n+1)$. Let $\alpha$ be a repeated root of $\calf (x)$ modulo $p$. Observe that if $\a$ satisfies $(\a^k+b) \equiv 0 \Mod{p}$ then in view of \eqref{eqn:4.14}, we have $ca^{n-1} \equiv 0 \Mod{p}$. This is not possible. Noting this and using $p \nmid k$, we obtain that $\a$ satisfies $\a [(n+1)(\alpha^k+b)+n(c-a)] \equiv 0 \Mod{p}$.

Note that $\a \equiv 0 \Mod{p}$ is a repeated root of $\calf(x)$ modulo $p$ if and only if $p $ divides $ \calf(0) = f_2(b)$ and $k \geq 2$, i.e., in this case $x$ is a repeated factor of $\calf(x)$ modulo $p$. A simple calculation yields $\calf(x) \in \langle p,x\rangle^2$ if and only if $p^2 \mid f_2(b)$. Furthermore, note that $p\mid (n+1)$, and hence from \eqref{eqn:4.15}, we have $p\mid (c-a)$. If $p\mid (c-a)$, then from \eqref{eqn:4.14}, we obtain $(\alpha^k+b)^{n+1}\equiv ca^n \Mod{p}$. Assume that $g(x)$ is any repeated factor of $\calf(x)$ modulo $p$. Then, from the above, $g(x)$ divides $(x^k+b)^{n+1}-ca^n$ modulo $p$. \\
If $g(x)=(x^k+b)-a$ is a repeated factor, then using $p\nmid a$,  it is easy to verify that $\calf(x)\in\langle p,g(x)\rangle^2$ if and only if $p^2\mid (a+nc)$. \\
If $(x^k+b)-a$ is not a repeated factor of $F(x)$ modulo $p$, then $g(x)$ divides $(x^k+b)^{n+1}- ca^n$ modulo $p$ and $g(x)\not\equiv (x^k+b)-a$. Using the identity
\[
[(\a^k+b)-a]\,\calf(\a)= (\a^k+b)^{n+1}+(c-a)(\a^k+b)^n-ca^n,
\]
we observe that $[(x^k+b)-a]\,F(x)\in\langle p,\,g(x)^2\rangle$ and $[(x^k+b)-a]\,\calf(x)\in\langle p^2,\,g(x)\rangle$ if and only if $\calf(x)\in\langle p^2,\,g(x)\rangle$. Moreover, $[(x^k+b)-a]\,\calf(x)\in\langle p^2,\,g(x)\rangle$ if and only if $p^2\mid (c-a)$.\\
Finally, using the identity $\langle p,g(x)\rangle^2=\langle p^2,g(x)\rangle\cap\langle p,g(x)^2\rangle$ and the above arguments, from Lemma~\ref{dedekind}, we conclude that $p\nmid [\Z_K:\Z[\theta]]$ if and only if $p^2\nmid (a+nc)$ and $p^2\nmid (c-a)$. \\[1mm]
\textbf{Case (v).} Now we consider the last case when $p\nmid cka(n+1)$. Now we show that there exists an integer $\beta$ such that the polynomial $x^k + b -\beta$ is a repeated factor of $F(x)$ over $\mathbb{Z}/p\mathbb{Z}$.\\
Let $t$ be the maximum power of $p$ dividing $nc+a$ and $\alpha$ be a repeated root of $\calf(x)=(x^k+b)^n+ c\sum_{i=1}^{n}a^{i-1}(x^k+b)^{n-i}$ in the algebraic closure of $\mathbb{Z}/p\mathbb{Z}$. Then in view of \eqref{eqn:4.15}, we have the following. 
\[k\alpha\,(\alpha^k+b)[(n+1)(\alpha^k+b)+n(c-a)]\equiv0\Mod{p}\]
Observe that if $\a$ satisfies $(\a^k+b) \equiv 0 \Mod{p}$ then in view of \eqref{eqn:4.14}, we have $ca^{n-1} \equiv 0 \Mod{p}$. This is not possible. Noting this and using $p \nmid k$, we obtain that $\a$ satisfies $\a [(n+1)(\alpha^k+b)+n(c-a)] \equiv 0 \Mod{p}$. Note that $\a \equiv 0 \Mod{p}$ is a repeated root of $\calf(x)$ modulo $p$ if and only if $p $ divides $ F(0) = f_1(b)$ and $k \geq 2$, i.e., in this case $x$ is a repeated factor of $\calf(x)$ modulo $p$. A simple calculation yields $\calf(x) \in \langle p,\,x\rangle^2$ if and only if $p^2 \mid f_1(b)$. \\
Now for any non-zero repeated root $\a$ of $\calf(x)$ modulo $p$, we have $(n+1)(\alpha^k+b)+n(c-a)]\equiv0\Mod{p}$. Using $p\nmid (n+1)$, we obtain $\alpha^k+b \equiv \frac{-n(c-a)}{(n+1)} \Mod{p}$. Therefore, there exists an integer in $\mathbb{Z}/p\mathbb{Z}$, say $\beta$ such that		
\begin{equation}\label{eqn:4.16}
    	\beta \equiv \frac{-n(c-a)}{(n+1)}\Mod{p^{2t+2}}.
\end{equation} 
Thus we have proved that any non-zero repeated root of $\calf (x)$ is a root of $x^k + b - \beta$ modulo $p$. We now show that any $\a_1 \not\equiv 0 \Mod{p}$ that satisfies $\a_1^k + b \equiv \beta \Mod{p^{2t+2}}$ is a repeated root of $\calf (x)$ modulo $p$ if and only if $p^{2t+1}$ divides $ (-n)^n(c-a)^{n+1} - a^nc~(n+1)^{n+1}$. Using the above relation, we have
\[(\alpha^k+b)-a\equiv-\frac{nc+a}{(n+1)}\Mod{p^{2t+2}}\]
Since $t$ is the highest power of $p$ dividing $(nc+a)$, in view of the above equation, we conclude that $p^t\mid\mid (\alpha^k+b-a)$. Substituting $\a=\a_1$ into \eqref{eqn:4.14}, we obtain
\begin{align*}
    [(\a_1^k+b)-a]\,\calf(\a_1)&\equiv (\a_1^k+b)^{n+1}+(c-a)(\a_1^k+b)^n-ca^n \Mod{p^{2t+2}},\\
    &\equiv\left(-\frac{n(c-a)}{(n+1)}\right)^{n+1}+(c-a)\left(-\frac{n(c-a)}{(n+1)}\right)^n-ca^n\Mod{p^{2t+2}},\\
    &\equiv \frac{1}{(n+1)^{n+1}}[(-n)^n(c-a)^{n+1}-ca^n(n+1)^{n+1}]\Mod{p^{2t+2}}.
\end{align*}
Taking the derivative of $[(x^k+b)-a]\,\calf(x)= (x^k+b)^{n+1}+(c-a)(x^k+b)^n-ca^n$ with respect to $x$, we obtain
\[[(x^k+b)-a]\,\calf (x)=-kx^{k-1}\calf (x)+kx^{k-1}(x^k+b)^{n-1}[(n+1)(x^k+b)+n(c-a)]\]
Substituting $x=\a_1$ and using \eqref{eqn:4.16} and $\a_1^k + b \equiv \b \Mod{p^{2t+2}}$, we obtain the following.
\[[(\a_1^k+b)-a]\,\calf (\a_1)=-kx^{k-1}\calf (\a_1)\Mod{p^{t+1}}]\]
Using the above equation and the fact that $p^t \mid\mid (\a_1^k+b-a)$, we deduce $\calf '(\a_1)\equiv 0 \Mod{p}$ if and only if $F(\a_1) \equiv 0 \Mod{p^{t+1}}$. Using the arguments above, we conclude that $\calf(x)$ has a non-zero repeated root modulo $p$ if and only if $p^{2t+1}$ divides $(-n)^n(c-a)^{n+1} - a^nc~(n+1)^{n+1}$. Thus, we have shown that $x^k+b-\bar{\beta}$ contains all the irreducible repeated factors $g(x) \neq x$ of $\calf(x)$ modulo $p$. Let $\a_1$ be the non-zero repeated root of $\calf(x)$ modulo $p$. So, for a positive integer $\a_1$ that satisfies $\a_1^k + b \equiv \b \Mod{p^{2t+2}}$, we can write

\begin{align} 
    \calf(x)=(x^k + b -\beta)^2h_2(x)+(x^k + b -\beta) \frac{\calf'(\a_1)}{k \a_1^{k-1}} + \calf(\a_1),
\end{align} 
for some polynomial $h_2(x)\in\mathbb{Z}[x]$. Using Lemma \ref{dedekind}, we have $p \nmid [\mathbb{Z}_K:\mathbb{Z}[\theta]]$ if and only if $F(x) \not\in \langle p,x^k+b-\beta \rangle^2$. 
Using an argument similar to Theorem~\ref{main result1} (vi),  we obtain $\calf(x) \not\in \langle p,x^k+b-\beta \rangle^2$ if and only if $[(-n)^n(c-a)^{n+1} - a^nc~(n+1)^{n+1}] \not\equiv 0 \Mod{p^2}$. This completes the proof of the theorem.
\end{proof}

\section{Analytic Results} \label{ana}
The following theorem proved by Granville \cite[Theorem 1]{granville} is the key ingredient in our proof of Theorem \ref{analytic main result}.
\begin{theorem} \label{thm:5.1}
	Suppose that $f(x) \in \Z[x]$, without any repeated roots. Let $ D=\gcd\{f(n): n \in \Z \}$, and select $D'$ to be the smallest divisor of $D$ for which $D/D'$ is square-free. If the $abc$-conjecture is true, then 
	\[ |\{ 1 \leq n \leq X : f(n)/D' \text{ is square-free.} \}| \sim \lambda_f X \]
	where $\lambda_f>0$ is a positive constant, which we determine as follows:
	\[ \lambda_f = \prod_{p \text{ prime}} \left( 1- \frac{\omega_f(p)}{p^{\,2+ \mathfrak{q}_p}} \right) \]
	where, for each prime $p$, let $\mathfrak{q}_p$ be the largest power of $p$ that divides $D'$ and let $\omega_f(p)$ denote the number of integers $a$ in the range $1 \leq a \leq p^{\,2+ \mathfrak{q}_p}$ for which $f(a) \equiv 0 \Mod{p^2}$.
\end{theorem}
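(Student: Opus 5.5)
This is Granville's theorem, quoted here rather than proved in the paper; the plan below follows the standard route to it. It combines an elementary square-free sieve for small primes with the $abc$-conjecture to control large square divisors.

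\textbf{Reduction.} First replace $f$ by a polynomial with $D'=1$. For each prime $p\mid D'$ with $p^{\mathfrak{q}_p}\parallel D'$, the condition that $p^2\nmid f(n)/D'$ depends only on $n$ modulo $p^{2+\mathfrak{q}_p}$. This is why the local density at $p$ is $1-\omega_f(p)/p^{2+\mathfrak{q}_p}$, and why this factor is positive: by minimality of $D'$ some residue class gives $p^2\nmid f(n)/D'$. Since $f$ has no repeated roots, its discriminant is nonzero. Hence for $p$ not dividing $D\cdot\mathrm{disc}(f)$, Hensel's lemma gives $\omega_f(p)\le d=\deg f$ roots modulo $p^2$. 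Consequently $\sum_p \omega_f(p)/p^{2+\mathfrak{q}_p}$ converges and $\lambda_f>0$.

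\textbf{Sieve.} Fix a parameter $\xi=\xi(X)\to\infty$ slowly and let $N_1(X)$ count the $n\le X$ for which $f(n)/D'$ has no square factor $p^2$ with $p\le \xi$. By the Chinese remainder theorem over $P=\prod_{p\le\xi}p^{2+\mathfrak{q}_p}$ we get $N_1(X)=X\prod_{p\le\xi}(1-\omega_f(p)/p^{2+\mathfrak{q}_p})+O(P)$, which is $\sim\lambda_f X$ if $\xi$ grows slowly enough. It then remains to show that the number of $n\le X$ with $p^2\mid f(n)$ for some prime $p>\xi$ is $o(X)$, and we split this range of $p$ into three parts.
\begin{itemize}
\item[(a)] For $\xi<p\le X$, count residue classes: each such $p$ contributes at most $d(X/p^2+1)$ values of $n$. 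The total over $\xi<p\le X/\log X$ is $O(X/\xi + X/\log^2 X)=o(X)$.
\item[(b)] For $X/\log X<p\le X$, the same count $dX/p^2+d$ gives only $O(X/\log X)$ after summing over $p$, using the prime number theorem (as in Hooley and Erd\H{o}s for $d\le 3$). In general this middle range is absorbed into (c), since the $abc$ bound below handles all $p>X/\log X$.
\item[(c)] For $p>X/\log X$, invoke the $abc$-conjecture.
\end{itemize}

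\textbf{Main obstacle: the large primes.} This is step (c), where elementary methods fail for $d\ge 4$. Following Granville (via Langevin's idea), the $abc$-conjecture applied to a Belyi-type transformation of the binary form $F(x,y)=y^df(x/y)$ gives
$$\rad(f(n))\gg_\varepsilon |n|^{d-1-\varepsilon}.$$
If $f(n)=mq^2$ with $q$ large, then $\rad(f(n))\le |f(n)|/q\ll n^d/q$, which forces $q\ll n^{1+\varepsilon}$. That alone is not enough, so we write $f(n)=mq^2$ with $m\ll X^{d-2+\varepsilon}$. We then apply the rad bound to the two-variable form in order to count pairs $(n,q)$ with $q>X/\log X$ and $q^2\mid f(n)$. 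It suffices to cover $n$ in short intervals, or equivalently to replace $n$ by $n+jX$ for a bounded number of shifts and use $F(x,y)$ homogeneously; this yields that such $n$ number $o(X)$. I expect arranging this reduction cleanly, in particular choosing the right homogenized polynomial so that the $abc$ rad bound yields $o(X)$ exceptional $n$ rather than just a size bound on $q$, to be the main technical difficulty. Combining (a)–(c) with the main term from the sieve gives the asymptotic $\sim\lambda_f X$.
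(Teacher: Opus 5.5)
\emph{The paper does not prove this statement; it is quoted from Granville. Your sketch therefore has to carry the whole proof, and it breaks down at step (c), which is the only non-routine part of the theorem.}

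Steps (a)--(b) are fine. Together they already dispose of every prime $p\le X$, since $\sum_{\xi<p\le X} d(X/p^2+1)=O(X/\xi+X/\log X)=o(X)$. The single-value bound $\rad(f(n))\gg_\varepsilon n^{d-1-\varepsilon}$ only tells you that a prime $q$ with $q^2\mid f(n)$ satisfies $q\ll X^{1+\varepsilon}$. So everything hinges on the range $X<q\ll X^{1+\varepsilon}$, where counting residue classes gives about $\pi(X^{1+\varepsilon})$ and is useless.

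For that range you offer only the sentence that replacing $n$ by $n+jX$ for a bounded number of shifts ``yields that such $n$ number $o(X)$''. This states the conclusion without an argument, and taken literally it cannot work, for two reasons:
\begin{itemize}
\item Shifts by $jX$ make the auxiliary polynomial depend on $X$. The implied constant in the $abc$-derived radical bound depends on the form, so it is no longer uniform.
\item A bounded number $k$ of shifts can at best give upper density $\le 1/k$ for the exceptional set, not density $0$.
\end{itemize}

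The missing idea is to apply the radical bound to a product of shifted copies with a \emph{fixed} shift, and then let the number of copies grow.
\begin{enumerate}
\item Take $M$ larger than $|\alpha-\beta|$ for all distinct roots $\alpha,\beta$ of $f$, fix $k\ge 2$, and put $h(x)=\prod_{j=0}^{k-1}f(x+jM)$. Then $h$ has degree $kd$ and no repeated roots.
\item Apply the Belyi/Langevin consequence of $abc$ that you quote to $y$ times the homogenization of $h$ (a form of degree $kd+1$ with no repeated factors). This gives
\[
\frac{|h(n)|}{\rad(h(n))}\ll_{\varepsilon,k,M} n^{1+\varepsilon}\qquad (h(n)\neq 0).
\]
\item Suppose two of $f(n),f(n+M),\dots,f(n+(k-1)M)$ were divisible by squares of primes exceeding $X/\log X$. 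Then the left side would be at least $(X/\log X)^2$: it is $\ge p_1p_2$ for distinct primes and $\ge p^3$ if the same prime occurs twice. This is impossible for $n\le X$ and $X$ large.
\item Hence each block $\{n+jM:0\le j<k\}$ contains at most one exceptional integer. Split each residue class modulo $M$ in $[1,X]$ into such blocks. This bounds the number of $n\le X$ with $p^2\mid f(n)$ for some $p>X/\log X$ by $X/k+O(kM)$. Letting $k\to\infty$ gives $o(X)$.
\end{enumerate}
With this in place, your sieve and step (a) complete the proof, and the separate bound $q\ll X^{1+\varepsilon}$ is not needed.

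A minor point: your local factor at $p\mid D'$ is correct only if $\omega_f(p)$ counts solutions of $f(a)\equiv 0\Mod{p^{2+\mathfrak q_p}}$, as in Granville. With the modulus $p^2$ printed in the statement, every $a$ is counted when $p\mid D'$, and the factor vanishes.
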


To prove Theorem \ref{analytic main result}, we will use Corollary \ref{cor:2.2}. We first give a proof of Corollary \ref{cor:2.2}.

\begin{proof}[Proof of Corollary \ref{cor:2.2}]
    Let $\ell$ be a prime dividing $\gcd(b,c)$. In view of $c$ being squarefree, the polynomials $f_1(x)$ and $F(x)$ are $\ell-$Eisenstein. Now we proceed to show that each prime $p$ dividing $\D_F$ satisfies one of the conditions $\textup{(i) - (vi)}$ of Theorem \ref{main result1}. Let $p$ be a prime dividing $\D_F$.
The conditions $\textup{(i)}$ and $\textup{(ii)}$ trivially follows using the facts $c$ is squarefree and $\rad(k) \mid c$. Assume $p \nmid ck$ and $p \mid \gcd(a,n)$. If $p^2 \mid a$, then we have $a_1 = \frac{a}{p} \equiv 0 \Mod{p}$. This implies
    \[ (-c_2)^s + c\,(ca_1)^s \equiv (-\frac{c+ (-c)^{p^j}}{p})^s \equiv \frac{c+ (-c)^{p^j}}{p} \not\equiv 0 \Mod{p}, \]
    where the last relation follows using the assumption on $c$. If $p \parallel a$, i.e., $a_1 \not \equiv 0 \Mod{p}$ , then the assumption on $c$ implies that $c_2 \equiv 0 \Mod{p}$. This implies
    \[ (-c_2)^s + c(ca_1)^s \equiv c^{1+s} a_1 \not\equiv 0 \Mod{p},  \]
    where the last relation follows using $p \nmid ca_1$. Thus $\textup{(iii)}$ of Theorem \ref{main result1} holds. Condition $\textup{(iv)}$ of Theorem \ref{main result1} trivially follows from the fact $f_1(b)$ is sqaurefree. The assumption $\rad(n+1)$   divides $a$ implies that $\textup{(v)}$ is vacuously true. Lastly $\textup {(vi)}$ follows using the assumption $f_1(b)$ and $n_1^n(1-a^nc)^{n+1} + a_1^n c (n+1)^{n+1} $ being squarefree. Thus we conclude that $F(x)$ is monogenic. Applying Thoerem \ref{main result1} with $k=1$ and $b=0$, along with the arguments similar to those used above, the monogeneity of $f_1(x)$ is ascertained.
\end{proof}

\begin{proof}[Proof of Theorem~\ref{analytic main result}]
    To prove this theorem, we use Corollary~\ref{cor:2.2}. We now verify that all the required conditions of the corollary are satisfied. Consider the polynomial $f_1(x) = x^n + c \sum_{i=1}^{n} (ax)^{n-i} \in \Z[x]$, where $c>1$ is a sqaurefree integer. Write $D= \gcd\{f_1(m): m \in \Z \}$. If $D>1$, then there exists a prime divisor $p$ of $D$. Thus, we have $p$ that divides both $f_1(0) = c$ and $f_1(1) = 1 + c\sum_{i=1}^n a^{n-i}$. This implies $p \mid 1$, which is not possible. Hence, we have $D=1$. As $c>1$, let $p$ be a prime divisor of $c$, then it is easy to observe that $f_1(x)$ is $p-$Eisenstein with respect to $p$, i.e., $f_1(x)$ is irreducible. Therefore, $f_1(x)$ does not have any repeated root.
     Assuming $abc$-conjecture, we use Theorem \ref{thm:5.1} to conclude that there are $\sim \prod_{p \text{ prime}} \left( 1- \frac{\omega_c(p)}{p^2} \right) \frac{B}{\ell}$ positive integers $b \leq B$ divisible by $\ell$ such that $f_1(b)$ is square free, where for each prime $p$, $\omega_c(p)$ denotes the number of integers $\alpha$ in the range $1 \leq \alpha \leq p^2$ for which $ f_1(\alpha)= \a^n + c \sum_{i=1}^{n} (a\a)^{n-i} \equiv 0 \Mod{p^2}$. Using $c$ as squarefree, we have $f_1(p^2) \equiv c \not\equiv 0 \Mod{p^2}$ for any prime $p$. In addition, note that the degree of $f_1(x)$ is $n$, so we have $\omega_c(p) \leq \min \{n,p^{2}-1 \}$. Now, let $\gcd(a,n) = t$, $n=n_1 t$, $a= a_1 t$, and $c= \kappa c_1$. Then we can write $$n^n (1-a^nc)^{n+1} + a^n c (n+1)^{n+1} = t^n\, [ n_1^n\, (1-a^nc)^{n+1} + a_1^n \,c\, (n+1)^{n+1} ]. $$
    Define the polynomial $G(x)$ using the following relation,
    \begin{equation} \label{eqn:5.1}
        (1+ n a^n \kappa x)^2\, G(x) = a_1^n (n+1)^{n+1}\, \kappa x + n_1^n\, (1-a^n \kappa x)^{n+1}.
    \end{equation}
    Let $D= \gcd \{ G(m): m \in \Z\}$. We will show that $D$ is sqaurefree and $G(x)$ does not have any repeated root. Firstly, assume that $d$ is a divisor of $D$, then $d$ divides both $G(0) = n_1^n$ and $G(1)$. Further, $G(1)$ divides $ a_1^n (n+1)^{n+1} \kappa + n_1^n (1-a^n \kappa)^{n+1}$. This implies that $d$ divides $a_1 (n+1) \kappa$ and $n_1$. Noting that $\gcd (a_1 (n+1), n_1) = 1$, we conclude $d \mid \kappa$. Thus, we have $D= \kappa$ which is squarefree. So, according to the notation of Theorem \ref{thm:5.1}, we have $D' = 1$. Now we proceed to show that $G(x)$ does not have any repeated root. A simple algebraic manipulation will show that the only repeated factor on the right hand side of \eqref{eqn:5.1} is $(1+na^n\kappa x)$ that precisely has the order $2$. Thus, $G(x)$ does not have any repeated factor. Thus, there are atleast $\prod_{p \text{ prime} } \left( 1 - \frac{\omega_G(p)}{p^2} \right) \frac{C}{\kappa}$ values $c_1 \leq \frac{C}{\kappa}$. Taking $c= \kappa c_1$, we conclude that there are at least $\prod_{p \text{ prime} } \left( 1 - \frac{\omega_G(p)}{p^2} \right) \frac{C}{\kappa}$ values of $c$ such that $a_1^n (n+1)^{n+1} c + n_1^n (1-a^n c)^{n+1}$ is square free. \\[1mm]
 To align with the conditions of Corollary~\ref{cor:2.2}, we require $c$ to be squarefree and to satisfy condition~(i) of the corollary. Writing $c=\kappa c_1$ and noting that $\kappa$ is squarefree, it follows that $c$ is squarefree if and only if $c_1$ is squarefree and $\gcd(c_1,\kappa)=1$. These restrictions will yield that we are left with \[\prod_{p \text{ prime} } \left( 1 - \frac{\omega_G(p)}{p^2} \right) \frac{C\phi(\kappa)}{\zeta(2)\kappa^2}\] choices of $c$. Now for each prime $p$ that divides $t$, if $p^2$ divides $a$, then select $\gamma_p \in (\Z/p\Z)^*$ so that $\gamma_p^p \not\equiv \gamma_p \Mod{p^2}$. If $v_p(a)=1$, then select $\gamma_p \in (\Z/p\Z)^*$ so that $\gamma_p^p \equiv \gamma_p \Mod{p^2}$. Define $\varrho = \rad(t)$, then by the Chinese remainder theorem, there exists $\gamma \in (\Z/\varrho^2\Z)^*$ such that $\gamma \equiv -\gamma_p \Mod{\varrho^2}$ for all prime $p$ dividing $\varrho$. Now, if we select $c \equiv \gamma \Mod{\varrho^2}$, then the conditions given in Corollary \ref{cor:2.2}(i) are satisfied. For such a restriction on $c$, we are left with $\prod_{p \text{ prime} } \left( 1 - \frac{\omega_G(p)}{p^2} \right) \frac{C\phi(\kappa)}{\zeta(2) \kappa^2 \varrho^2}$ choices of $c$.\\[1mm]
 Combining all of the above estimates along with Corollary \ref{cor:2.2} and noting that $\omega_c(p) \leq \min \{n,p^{2}-1 \}$, we have atleast 
    \[
    \prod_{p \leq \sqrt{n}}\frac{1}{p^2} \left( 1 - \frac{\omega_G(p)}{p^2} \right) \prod_{p > \sqrt{n}} \left( 1 - \frac{n}{p^2} \right)^2 \prod_{p \mid \ell  k} \left( 1-\frac{1}{p} \right) \frac{BC}{\zeta(2) \kappa \varrho^2}
    \] 
    choices of pairs $(b,c)$ such that both $f_1(x)$ and $F(x)$ are monogenic.
\end{proof}

\begin{remark}
Using techniques similar to those in Theorem~\ref{analytic main result}, one can derive an analytic lower bound related to the monogenity of $f_2(x)$ and $\calf(x)$.
\end{remark}
\section{Proof of Theorems~\ref{t1} and \ref{t2}}\label{section:5}
\begin{proof}[Proof of Theorem~\ref{t1}]
The given differential equation is \begin{align}{\label{diff1}}
    \left(\frac{d^k }{dx^k}+b\right)^ny+ca^{n-1} \left(\frac{d^k }{dx^k}+b\right)^{n-1}y+\cdots+ca \left(\frac{d^k }{dx^k}+b\right)+cy=0.
    \end{align}
   Observe that $F  =(z^k+b)^n + c\sum_{i=1}^{n}(a(z^k+b))^{n-i}$ is the irreducible auxiliary equation associated with \eqref{diff1}, having root $\theta$. Suppose that every prime $p$ dividing the discriminant $\D_{\mathcal{F}}$ satisfies one of the conditions (i)–(vii) of Theorem \ref{main result1}. Then, by \eqref{formula}, it follows that $\Z_K = \mathbb{Z}[\theta]$ and 
 \[
 \mathbb{Z}[\theta] = \{c_0 + c_1\theta + c_2 \theta^2 +\cdots + c_{kn-1}\theta^{kn-1} \;|\; c_k \in \mathbb{Z} \text{ for all } 1\leq k\leq kn-1     \}.
 \]
Thus, every root of the equation $F(z) = 0$ can be expressed in the form
$$
c_0^{(i)} + c_1^{(i)}\theta + c_2^{(i)}\theta^2 + \cdots + c_{kn-1}^{(i)}\theta^{kn-1},
$$
where each $c_{j-1}^{(i)} \in \mathbb{Z}$ for all $1 \leq i,j \leq kn$. Consequently, the general solution of the differential equation \eqref{diff1} is given by
\begin{align*}
    y(x) = \sum_{i=1}^{kn} \alpha_i\prod_{j=1}^{kn}e^{c_{j-1}^{(i)} \theta^{j-1}x},
\end{align*}
where  $\alpha_i$ is arbitrary real constants for all $1\leq i,j \leq kn$. This completes the proof of Theorem.
\end{proof}
\begin{proof}[Proof of Theorem~\ref{t2}]
   The proof follows along the same lines as the above argument and is therefore omitted.
\end{proof}

\section{Examples of Monogenic Families}\label{examples}

In this section, we provide an infinite family of monogenic polynomials. By specializing the parameters, we can generate various sequences of irreducible polynomials with a trivial index.

\begin{example}
Let $n = 2$, which implies $\text{rad}(n+1) = 3$. We set $a = 1$, so $t = \gcd(1, 2) = 1$ and condition \textup{(i)} of Corollary \ref{cor:2.2} is  satisfied. Let $c = 30$ and $k$ be any integer such that $\text{rad}(k) \mid 30$ (for instance, $k=2^m$ or $k=5^m$). The polynomial $f_1(x)$ is given by:
\[ f_1(x) = x^2 + 30(x+1) = x^2 + 30x + 30. \]
For condition \textup{(ii)}, we check the squarefreeness of the following term:
\[ n_1^n (1-a^nc)^{n+1} + a_1^n c (n+1)^{n+1} = 2^2(1-30)^3 + 30(3)^3 = -96746. \]
Since $-96746 = -2 \times 48373$ is squarefree, condition \textup{(ii)} is satisfied, provided that $f_1(b)$ is squarefree and $\gcd(b, 30) > 1$. Taking $b=30$, we have $f_1(30) = 1830$, which is squarefree. Thus, the polynomials 
\[ f_1(x)= x^2 + 30x + 30 \quad \text{and} \quad F(x) = (x^k + 30)^2 + 30(x^k + 30) + 30 \]
are both irreducible and monogenic. The existence of infinitely many such $b$ follows from the result of Erd\H{o}s \cite{erdos1953}.
\end{example}

\begin{remark}
The existence of infinitely many such integers $b$ follows from the fact that $f_1(x)$ is primitive. Indeed, by a classical result of Erd\H{o}s \cite{erdos1953}, if $f_1(x) \in \mathbb{Z}[x]$ has no fixed square factor, then there are infinitely many integers $b$ for which $f_1(b)$ is squarefree.
\end{remark}

\begin{example}
Let $n=3$, $k=2$, $a=1$, $b=1$, and $c=2$. Then
$
f_2(x)=x^3+2(1+x+x^2),
$
and
$
\calf(x)=f_2(x^2+1)=x^6+5x^4+9x^2+7.
$
We can verify that both $f_2(x)$ and $\calf(x)$ are irreducible over $\mathbb{Q}$. Using Proposition~\ref{discriminant_of second polyn}, we obtain
$
\D_{\calf} = \pm 2^8 \cdot 7 \cdot 11.
$
Thus, the primes dividing $\D_{\calf}$ are $2$, $7$, and $11$. We now verify the conditions of Theorem~\ref{main result2}. In view of \eqref{formula}, it follows that $7$ and $11$ do not divide $[\mathbb{Z}_K:\mathbb{Z}[\theta]]$. It remains to consider $p=2$. Since $2\mid c$ but $2^2\nmid a^{n-1}c=2$, by Theorem~\ref{main result2} \textup{(i)}, we conclude that $2\nmid [\mathbb{Z}_K:\mathbb{Z}[\theta]]$. Therefore, by Theorem~\ref{main result2}, we conclude that $\calf(x)$ is monogenic.
\end{example}
\begin{example}
Consider $n = 3~, k = 2$, $a = 1$, $c = 2$ and $b = 2$. Then $f_1(x) = x^3 + 2x^2 + 2x + 2 \in \Z[x]$ and $F(x)  = x^6 + 8x^4 + 22x^2 + 22\in\Z[x]$. Clearly, $f_1(x)$ and $F(x)$ are irreducible over $\Q$.
Using the formula in \eqref{discriminant_of_F}, we have
$ \D_F =\pm2^9 \cdot 11^2$. We now verify that the prime divisors $p \in \{2, 11\}$ satisfy the conditions of Theorem \ref{main result1}:

\begin{itemize}
    \item For $p = 2$, we have $p \mid c$ since $c=2$. Because $2^2 \nmid 2$, statement \textup{(i)} is satisfied immediately.
    \item For $p = 11$, we note that $p \nmid ckan(n+1)$ because $ckan(n+1) = 2 \cdot 2 \cdot 1 \cdot 3 \cdot 4 = 48$. Therefore, we refer to statement \textup{(vi)}. We check the three required non-divisibility conditions:
    \begin{enumerate}
        \item $p^2 \nmid f_1(b)$: We have $11^2 \nmid 22$, which holds.
        \item $p \nmid (1+a^ncn)$: We have $11 \nmid (1+6) = 7$, which holds.
        \item $p^2 \nmid [n^n(1-a^nc)^{n+1}+a^nc(n+1)^{n+1}]= 539$. We have $11^2 \nmid 539$, which holds.
    \end{enumerate}
    Thus, statement \textup{(vi)} is completely satisfied for $p=11$.
\end{itemize}

Since every prime divisor of $\D_F$ satisfies the necessary criteria, no prime divides the index $[\mathbb{Z}_K : \mathbb{Z}[\theta]]$. Therefore, $\mathbb{Z}_K = \mathbb{Z}[\theta]$ and $F(x) = x^6 + 8x^4 + 22x^2 + 22$ is a monogenic quadrinomial.
\end{example}
\bibliographystyle{amsplain}
\bibliography{references}  
\end{document}